\documentclass[a4paper,reqno]{amsart}
\usepackage[initials]{amsrefs}
\usepackage{graphicx}
\usepackage{amsmath,amssymb,amsthm,mathrsfs}
\usepackage{hyperref}
\usepackage{placeins}
\hypersetup{hidelinks,colorlinks=true,linkcolor=blue,citecolor=blue}
\evensidemargin=\oddsidemargin
\numberwithin{equation}{section}

\newtheorem{theorem}{Theorem}[section]
\newtheorem{lemma}[theorem]{Lemma}
\newtheorem{proposition}[theorem]{Proposition}

\theoremstyle{remark}
\newtheorem{remark}[theorem]{Remark} 

\newcommand{\A}{\mathcal A}
\newcommand{\Hh}{\mathcal H}
\newcommand{\Pp}{\mathcal P}
\newcommand{\zA}{\zeta_{\A}}
\newcommand{\e}{\varepsilon}
\newcommand{\Res}{\operatorname{Res}}
\newcommand{\Sodd}{\mathcal S_{\mathrm{odd}}}
\newcommand{\Gc}{\mathcal G}
\newcommand{\Bns}{\mathcal E}
\newcommand{\Cns}{\mathcal K}

\begin{document}

\title[Period-three secondary term in the hyperelliptic ensemble]
{The Period-Three Secondary Term in the Mean Value of $L(\tfrac12,\chi_D)$ \\ in the Hyperelliptic Ensemble}

\author[H. Jung]{Hwanyup Jung}
\address{Department of Mathematics Education, Chungbuk National University, 1 Chungdae-ro, Seowon-gu, Cheongju 28644, Republic of Korea}
\email{hyjung@chungbuk.ac.kr}
\subjclass[2020]{Primary 11M38; Secondary 11R59, 11T24, 11T55}
\keywords{quadratic Dirichlet $L$-functions, hyperelliptic ensemble, first moment, 
secondary terms, period-three phenomenon, function fields}

\begin{abstract}
Let $q$ be an odd prime power.  We revisit Florea's asymptotic formula for the first
moment of quadratic Dirichlet $L$-functions over the odd-degree hyperelliptic ensemble
$\mathcal H_{2g+1}$, and show that its secondary term is not the complete one.  The
square-dual generating function has three double poles of the same modulus on the
secondary circle: the positive real one reproduces Florea's polynomial, while the two
nonreal conjugate poles contribute at the same order.  The complete secondary term is
$q^{2g/3}(\alpha_{g\bmod3}g+\beta_{g\bmod3})$, with real coefficients whose dependence
on the genus has minimal period exactly three, and on at least two of the three residue
classes the nonreal poles contribute a term of order $gq^{2g/3}$.
We also prove the formula for every odd prime power $q$, whereas Florea assumes
$q\equiv1\bmod4$ throughout; this is achieved by a phase-normalized quadratic Gauss sum
that restores the multiplicativity and the local evaluations her argument uses.  Exact
finite-ensemble moments for $q=3$ exhibit the three residue-class trends.
\end{abstract}
\maketitle

\section{Introduction}

Let $q$ be an odd prime power.  We study the first moment of quadratic Dirichlet
$L$-functions at the central point over the odd-degree hyperelliptic ensemble,
$$
M_1(g):=\sum_{D\in\Hh_{2g+1}}L\!\left(\tfrac12,\chi_D\right),
$$
as $g\to\infty$ with $q$ fixed.  Here $\Hh_{2g+1}$ is the set of monic square-free
polynomials of degree $2g+1$ in $\mathbb F_q[T]$, and $\chi_D$ is the quadratic character
attached to $D$ (see \S\ref{subsec:notation}).
Andrade and Keating determined the principal term in this moment~\cite{AK12}.
Florea subsequently combined the square-free sieve with Poisson summation and obtained a
secondary term of size $gq^{2g/3}$, together with an error
$O_{q,\e}\!\left(q^{g(1+\e)/2}\right)$ \cite{Fl17}.  Her secondary term is a single
polynomial in the genus: it equals $q^{(2g+1)/3}R(2g+1)$ with $R$ a linear polynomial,
and it arises from the positive real pole of the square-dual generating function.

The point of the present paper is that this is not the whole secondary contribution.
In the square-dual calculation the relevant denominator is $(1-q^4z^3)^2$, so the
secondary circle $|z|=q^{-4/3}$ carries three double poles $z_k=q^{-4/3}\omega^k$
$(k=0,1,2)$, where $\omega=e^{2\pi i/3}$.  They have the same modulus and are crossed
simultaneously by a circular contour deformation.  The positive real pole $z_0$
reproduces Florea's polynomial; the other two are complex conjugates, contribute at the
same order, and enter with the phases $\omega^{k(g+1)}$, so that their sum is real and
depends on $g\bmod3$.  The complete secondary term is therefore
$$
q^{2g/3}\bigl(\alpha_{g\bmod3}g+\beta_{g\bmod3}\bigr),
$$
and the dependence is genuine: the first Fourier coefficient of $m\mapsto\alpha_m$ is
nonzero, so the minimal period is exactly $3$.  On at least two of the three residue
classes modulo $3$
the two nonreal poles contribute a term of order $gq^{2g/3}$, which exceeds the error
term $O_{q,\e}(q^{g(1+\e)/2})$ and cannot be absorbed into it
(Proposition~\ref{prop:nonreal-lower-bound}).  The secondary term stated in
\cite{Fl17}*{Theorem~1.1} is thus the contribution of $z_0$ alone
(Remark~\ref{rem:Florea-identification}): the present formula reduces to Florea's when
$z_1$ and $z_2$ are discarded, so the result is a completion, rather than a different
normalization, of the secondary-term calculation.

The relation is in fact exact: Florea's polynomial is the average, over $m\in\{0,1,2\}$,
of the three quantities $q^{2g/3}(\alpha_mg+\beta_m)$, of which the complete secondary
term is the one with $m\equiv g\bmod3$; see \eqref{eq:mean-over-classes}.
The three need not be close to that average: for $q=3$ the coefficients
$\alpha_0,\alpha_1,\alpha_2$ are approximately $7.49$, $-7.99$ and $3.50$ times their
mean, and one of them has the opposite sign.
The two calculations are moreover shown to agree up to the point of divergence: the double contour
integral \eqref{eq:square-double-integral} reached here coincides, factor for factor,
with the corresponding display of \cite{Fl17}*{Section~6}, so that the entire divergence
is localized in the residue bookkeeping at the secondary circle.
Subsection~\ref{subsec:Florea-comparison} carries out the comparison.
The result of \cite{AK12} is not affected: the term isolated
here has size $gq^{2g/3}$, which lies well inside the error term there.

A second contribution is that Theorem~\ref{thm:main} is proved for every odd prime
power $q$, whereas \cite{Fl17}*{Theorem~1.1} assumes that $q$ is a prime with
$q\equiv1\bmod4$.  That congruence is retained throughout \cite{Fl17}, so the case
$q\equiv3\bmod4$ is not treated there, and the one remaining case discussed in
\cite{Fl17}*{Section~9} is explicitly left uncarried.  The phase-normalized quadratic
Gauss sum introduced in \S\ref{subsec:phase-poisson} removes the constant-field
phase once and for all, and thereby disposes of all cases uniformly.  This is not a
matter of convenience: for $q\equiv3\bmod4$ the unnormalized quadratic Gauss sums are
not multiplicative in the modulus, so that the bivariate Euler product on which the
whole square-dual calculation rests would fail, whereas the normalized sums are
multiplicative for every odd prime power $q$.

Write $\Pp$ for the set of monic irreducible polynomials in $\mathbb F_q[T]$ and put
$|P|=q^{\deg P}$, and let $\zA(s)=(1-q^{1-s})^{-1}$ be the zeta function of
$\mathbb F_q[T]$.  Define
$$
\mathscr P(s)=\prod_{P\in\Pp}\left(1-\frac1{|P|^s(|P|+1)}\right).
$$
For $m\in\{0,1,2\}$, the coefficients $\alpha_m$ and $\beta_m$ are defined explicitly in
\eqref{eq:alpha-periodic} and \eqref{eq:beta-periodic}, respectively; they depend only
on $q$.
All the quantities entering the main result are now defined, and we can state it.

\begin{theorem}\label{thm:main}
For every fixed odd prime power $q$, every $\e>0$ and every $g\ge1$, one has
\begin{align}\label{eq:main-theorem}
M_1(g)=\frac{\mathscr P(1)}{2\zA(2)}q^{2g+1}\left(2g+2+\frac4{\log q}\frac{\mathscr P'}{\mathscr P}(1)\right)
+q^{2g/3}\bigl(\alpha_{g\bmod3}g+\beta_{g\bmod3}\bigr)+O_{q,\e}\!\left(q^{g(1+\e)/2}\right).
\end{align}
The coefficients $\alpha_m$, $\beta_m$ are real.  
Extending the pairs $(\alpha_m,\beta_m)$ periodically to $m\in\mathbb Z$, 
the resulting sequence has minimal period exactly $3$.
\end{theorem}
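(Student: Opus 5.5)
We will follow Florea's architecture and change only the treatment of the Gauss sums and the residue computation on the secondary circle. First, write $L(\tfrac12,\chi_D)$ via the approximate functional equation, which is exact in the function field setting. This gives two sums of $\chi_D(f)/|f|^{1/2}$, over $\deg f\le g$ and $\deg f\le g-1$. Next, detect square-freeness of $D\in\Hh_{2g+1}$ by $\sum_{C^2\mid D}\mu(C)$. This splits $M_1(g)$ into a contribution of small $\deg C$ and one of large $\deg C$. The large-$C$ part is bounded trivially, or by a Lindelöf-type bound, by $O_{q,\e}(q^{g(1+\e)/2})$.

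For the small-$C$ part, we apply Poisson summation over monic $D$ of degree $2g+1$ in residue classes mod $f$. For $q\equiv3\bmod4$ we use the phase-normalized Gauss sum $G(V,\chi_f)$ from \S\ref{subsec:phase-poisson}. This is the key point: we check that the normalized sums are multiplicative in $f$ for coprime moduli, and we record their values on prime powers $P^j$ for $P\mid V$ and $P\nmid V$ exactly as Florea does. The dual sum then splits into three parts.
\begin{itemize}
\item The $V=0$ term gives, with $f=\square$, the main term $\frac{\mathscr P(1)}{2\zA(2)}q^{2g+1}(2g+2+\cdots)$. Its derivation is identical to \cite{Fl17} once the local factors agree.
\item The non-square $V\neq0$ terms are bounded by $O(q^{g(1+\e)/2})$ using the Lindelöf bound for $L(s,\chi_V)$.
\item The terms with $V=\square$ give the square-dual contribution.
\end{itemize}

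For the square-dual part, we write the sum over $f$ and $V=\ell^2$ as the double contour integral \eqref{eq:square-double-integral}, whose integrand has a bivariate Euler product. After summing over $\ell$ and over the Poisson degree, the integrand in the remaining variable $z$ contains $(1-q^4z^3)^{-2}$ times a factor holomorphic in $|z|<q^{-1-\delta}$ or some comparable larger radius. We then shift the contour from a small circle to $|z|=q^{-1-\e'}$, which lies beyond $q^{-4/3}$. This crosses exactly the three double poles $z_k=q^{-4/3}\omega^k$, and the new contour contributes $O(q^{g(1+\e)/2})$. The residue at a double pole of $z^{-N}H(z)/(1-q^4z^3)^2$ with $N\approx 2g$ has the form $z_k^{-N}(aN+b)$ with $a,b$ depending on $H(z_k)$ and $H'(z_k)$. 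Since $z_k^{-N}=q^{4N/3}\omega^{-kN}$, summing over $k$ gives $q^{2g/3}(\alpha_{g\bmod 3}g+\beta_{g\bmod3})$. Conjugation symmetry $H(\bar z)=\overline{H(z)}$ pairs $z_1$ with $z_2$, so $\alpha_m$ and $\beta_m$ are real. This derivation defines \eqref{eq:alpha-periodic} and \eqref{eq:beta-periodic}.

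For the minimal period, write $\alpha_m=A_0+2\,\mathrm{Re}(A_1\omega^{m'})$, where $A_1$ is the $z_1$ residue coefficient and $m'$ is an affine function of $m$. The period is exactly $3$ iff $A_1\ne0$, or the analogous $\beta$ coefficient is nonzero. Now $A_1$ is a nonzero explicit constant times $H(z_1)$. So it suffices to show $H(q^{-4/3}\omega)\neq0$. Here $H$ is an absolutely convergent Euler product at that point, times elementary factors such as $\zA$-values, which are nonvanishing. Each Euler factor is $1+O(|P|^{-1-\delta})$, so the product is nonzero provided no single local factor vanishes. For small-degree primes we will check this directly: the factor is a rational function of $u=z^{\deg P}$, and its roots are not on the modulus $q^{-4\deg P/3}$. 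We expect this nonvanishing, and more precisely controlling the elementary prefactors so that the constant in front of $H(z_1)$ is nonzero, to be the main obstacle. A fallback is to compute the $q$-independent leading behaviour, or to use the $q=3$ numerics only as a sanity check.
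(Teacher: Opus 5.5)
Your overall architecture matches the paper's. There are, however, genuine gaps.

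\textbf{The sieve step.} Möbius detection $\sum_{C^2\mid D}\mu(C)$ produces $\asymp q^{n}$ moduli $C$ of each degree $n$. Each is paired with $q^{2g+1-2n}$ polynomials $H$ of odd degree. So bounding the large-$C$ part ($d(C)>A$) trivially or by Lindel\"of, with no cancellation in $\mu(C)$, gives at best $q^{2g-A+\e g}\ge q^{g+1}$ for any $A<g$. That is far above $q^{g(1+\e)/2}$, and above the secondary term $gq^{2g/3}$ you are trying to isolate. If you take no large part at all, the Möbius sum can only be controlled through its generating function $\sum_{(C,f)=1}\mu(C)x^{d(C)}=(1-qx)\prod_{P\mid f}(1-x^{d(P)})^{-1}$. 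This is precisely the identity \eqref{eq:character-average} the paper uses. There $C$ runs only over the sparse set $C\mid f^\infty$, with $\ll_\e q^{\e(g+d(f))}$ elements of each degree. So only the endpoint $d(C)=g$ is bounded trivially, at cost $O(q^{N/2+\e g})$, and the rest of the range is completed inside the Euler products.

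\textbf{Bookkeeping of $V=0$ against $V=\square$.} The zero-frequency term is not the main term up to an admissible error. It equals $\mathcal Q_g+\mathcal R_0(g;\rho)+O(q^{\e g})$, and the direct bound $\mathcal R_0(g;\rho)\ll q^{2g+1}\rho^{-g/2}$ with $\rho<q$ exceeds $q^{3g/2}$. The paper does not estimate $\mathcal R_0$; it cancels it. When the $w$-contour in \eqref{eq:square-double-integral} is moved outward, two poles are crossed. The pole $w=q^{-1}$ gives $A_g+A_{g-1}=-\mathcal R_0(g;\rho)$, which is \eqref{eq:A-R0-cancellation}. The pole $w=qz$ gives the secondary $z$-integral carrying $(1-q^4z^3)^{-2}$. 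Your sketch passes straight to a single $z$-integral and drops the first branch, so as written your decomposition does not close.

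\textbf{Minimal period.} You correctly reduce this to $\Phi(z_1)\ne0$; the paper's first Fourier coefficient of $m\mapsto\alpha_m$ is $-\tfrac{q^{-1/3}}3\,\omega\,\Phi(z_1)$. The rational factors of $\Phi$ are visibly nonzero at $z_1$, so by absolute convergence it suffices that no local factor $D_P(z_1,qz_1)$ vanishes. But that is exactly the step you leave open, and checks at small degree or numerics at $q=3$ cannot give it for every odd $q$.

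The missing idea is to put $\zeta=z_k^{d(P)}$ and use $\zeta^3=|P|^{-4}$. Then $|P|^4\zeta^4=\zeta$, and the numerator of $D_P(z_k,qz_k)$ collapses from a quartic in $\zeta$ to $\frac{|P|-1}{|P|}\bigl(|P|^3\zeta^2+(|P|^2-1)\zeta-1\bigr)$. This is a real quadratic with positive discriminant, hence with real roots. But $\zeta=|P|^{-4/3}\omega^{kd(P)}$ is either nonreal or equal to $|P|^{-4/3}$, and at $|P|^{-4/3}$ the quadratic equals $|P|^{1/3}+|P|^{2/3}-|P|^{-4/3}-1>0$. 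The argument is uniform in $q$ and $P$ (Lemma~\ref{lem:secondary-nonvanishing}).

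Your proposed criterion, that the roots avoid the whole circle $|z^{d(P)}|=|P|^{-4/3}$, is stronger than what is needed. It is also harder to verify, precisely because it gives up the relation $\zeta^3=|P|^{-4}$ that holds at the three secondary points.
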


The three terms in \eqref{eq:main-theorem} are the principal term determined in
\cite{AK12}, the complete secondary term, and the error.

Sections~\ref{sec:standard-reductions} and \ref{sec:principal-nonsquare} follow Florea's
argument, with the modifications needed for an arbitrary odd prime power $q$, and reduce
Theorem~\ref{thm:main} to the evaluation of the square-dual contribution.  That
evaluation, including all three secondary double poles and the exact period-three
dependence, is carried out in Section~\ref{sec:periodic-term}.
Section~\ref{sec:numerical} gives exact finite-ensemble evidence; it is not used in the
proof.

\section{Preliminaries and the decomposition of the moment}\label{sec:standard-reductions}

The aim of this section is to reduce $M_1(g)$ to the decomposition
\eqref{eq:moment-decomposition-short} into zero-frequency, square-dual and
nonsquare-dual contributions.  Two ingredients are needed: the square-free sieve, which
removes the square-free condition on $D$, and the Poisson summation formula, which
converts the resulting character sums into sums over a dual variable.  We establish the
latter in a phase-normalized form valid for every odd prime power $q$.

\subsection{Notation, the functional equation, and the square-free sieve}\label{subsec:notation}

Throughout the paper, $q$ is a fixed odd prime power, $\A=\mathbb F_q[T]$, and $g\ge1$ is
an integer; all implied constants are independent of $g$.  
We write $\A^+$ for the set of monic polynomials, $\Pp$ for the set of monic irreducible
polynomials, whose elements are called prime polynomials, and $\Hh$ for the set of monic
square-free polynomials in $\A$.  
For a nonzero polynomial $f$, put $d(f)=\deg f$ and $|f|=q^{d(f)}$.
We denote by $\phi(f)$ the polynomial Euler totient over $\A$. 

For a subset $\mathcal U\subseteq\A^+$ and an integer $n\ge0$, we use the following notation:
$\mathcal U_n=\{f\in\mathcal U:d(f)=n\}$, $\mathcal U_{\le n}=\{f\in\mathcal U:d(f)\le n\}$.
Whenever an index is negative, we use the convention
$\mathcal U_n=\mathcal U_{\le n}=\varnothing$.  This convention applies in
particular to all Poisson ranges below.

For $\operatorname{Re}(s)>1$, the zeta function of $\A$ is given by 
$$
\zA(s)=\sum_{f\in\A^+}|f|^{-s}=\prod_{P\in\Pp}(1-|P|^{-s})^{-1}.
$$
It is well known that $\zA(s)=(1-q^{1-s})^{-1}$.
In the $u$-variable we put $\mathcal Z_{\A}(u):=(1-qu)^{-1}$, so that
$\mathcal Z_{\A}(q^{-s})=\zA(s)$.

For $A\in\A$ and $B\in\A^+$, let $\bigl(\frac AB\bigr)$ denote the quadratic residue
symbol (Jacobi symbol), so that $\bigl(\frac AB\bigr)=0$ precisely when $(A,B)\ne1$.
For coprime $A,B\in\A^+$, the quadratic reciprocity law over $\A$ gives
$$
\left(\frac AB\right)=\eta_q^{d(A)d(B)}\left(\frac BA\right), 
$$
where $\eta_q=(-1)^{(q-1)/2}$. 

For a completely multiplicative function $\chi:\A^+\to\mathbb C$ with $|\chi(f)|\le1$,
its $L$-function is defined by
\begin{equation*} 
L(s,\chi):=\sum_{f\in\A^+}\chi(f)|f|^{-s}=\prod_{P\in\Pp}\left(1-\chi(P)|P|^{-s}\right)^{-1}, 
\quad (\operatorname{Re}(s)>1).
\end{equation*}
Upon writing $u=q^{-s}$, we write
\begin{equation*} 
\mathcal L(u,\chi)=L(s,\chi)=\sum_{n\ge0}\Biggl(\sum_{f\in\A_n^+}\chi(f)\Biggr)u^n.
\end{equation*}
For nonconstant $D\in\Hh$, put
$\chi_D(f)=\bigl(\frac Df\bigr)$; with this convention the square-free sieve below is
applied directly in the $D$-variable, without first interchanging numerator and
denominator by quadratic reciprocity.  
For $D\in\Hh_{2g+1}$ this is the finite-prime part of the
quadratic Hecke character of $\mathbb F_q(T)$ associated with the extension
$\mathbb F_q(T)(\sqrt D)$, or equivalently with the curve $y^2=D(T)$.
If $\eta_q=-1$, its conductor has an infinite component; in that case
$\chi_D$ should not be called a Dirichlet character modulo $D$ alone.
The finite Euler product $\mathcal L(u,\chi_D)$ is nevertheless exactly the
numerator of the zeta function of $y^2=D(T)$ (the unique place at infinity is
ramified because $d(D)$ is odd); see \cite{Ro02}*{Chapter~9} for the
Artin--Hecke interpretation.  It is therefore a polynomial of degree
$2g$ satisfying
\begin{equation*}
\mathcal L(u,\chi_D)=(qu^2)^g\mathcal L\!\biggl(\frac{1}{qu},\chi_D\biggr).
\end{equation*}
Consequently, as in \cite{Fl17}*{Lemma~2.1}, we have 
\begin{equation}\label{eq:afe}
L\!\left(\tfrac12,\chi_D\right)
=\sum_{f\in\A^+_{\le g}}\frac{\chi_D(f)}{\sqrt{|f|}}+\sum_{f\in\A^+_{\le g-1}}\frac{\chi_D(f)}{\sqrt{|f|}}.
\end{equation}

For $f\in\A^+$, define the quadratic character $\psi_f:\A\to\{0,\pm1\}$ by
$\psi_f(H)=\bigl(\frac Hf\bigr)$, so that $\chi_D(f)=\psi_f(D)$ for every nonconstant $D\in\Hh$.
The square-free sieve of \cite{Fl17}*{Lemma~2.2} gives
\begin{align}\label{eq:character-average}
\sum_{D\in\Hh_{2g+1}}\chi_D(f)
=\sum_{\substack{C\in\A^+_{\le g}\\C\mid f^\infty}}\sum_{H\in\A^+_{2g+1-2d(C)}}\psi_f(H)
-q\sum_{\substack{C\in\A^+_{\le g-1}\\C\mid f^\infty}}\sum_{H\in\A^+_{2g-1-2d(C)}}\psi_f(H),
\end{align}
where $C\mid f^\infty$ means that every prime polynomial divisor of $C$ divides $f$. 

\subsection{Phase-normalized Poisson summation}\label{subsec:phase-poisson}

For an arbitrary odd prime power $q$ the quadratic Gauss sums and the Poisson formula
require modification, as noted in \cite{Fl17}*{Section~9}.  We handle this by a phase
normalization built into the definition of the Gauss sum, so that a single Gauss sum
occurs throughout and the phase-free local evaluations and multiplicativity used in
Florea's argument are restored for every odd prime power $q$.

Let $p=\operatorname{char}\mathbb F_q$ and define a nontrivial additive character $\varphi:\mathbb F_q\to\mathbb C^\times$ by
$$
\varphi(\alpha)=\exp\!\left(\frac{2\pi i}{p}\operatorname{Tr}_{\mathbb F_q/\mathbb F_p}(\alpha)\right).
$$
For $a\in\mathbb F_q((T^{-1}))$ write $[T^{-1}]a$ for the coefficient of $T^{-1}$ in the
expansion of $a$, and put $e(a)=\varphi\bigl([T^{-1}]a\bigr)$; thus $e(a)=1$ for $a\in\A$.
Let $\chi_2$ be the quadratic character of $\mathbb F_q^\times$ and set
\begin{equation*}
\epsilon_q=q^{-1/2}\sum_{\alpha\in\mathbb F_q^\times}\chi_2(\alpha)\varphi(\alpha).
\end{equation*}
Then $\epsilon_q^2=\chi_2(-1)=\eta_q$ and $\epsilon_q^4=1$;
see \cite{IR90}*{Chapter~10, Section~3}.

For $f\in\A^{+}$, let $\nu(f)\in\{0,1\}$ be the parity of $d(f)$.
For $f\in\A^+$ and $V\in\A$, define the phase-normalized quadratic Gauss sum
\begin{equation}\label{eq:Gc-definition-odd}
\Gc(V,\psi_f):=\overline{\epsilon_q}^{\,\nu(f)}
\sum_{u\bmod f}\psi_f(u)\,e\!\left(\frac{uV}{f}\right).
\end{equation}
Here $u$ runs over a complete residue system modulo $f$; the summand depends only on
$u\bmod f$, because $e(a)=1$ for $a\in\A$.
When $d(f)$ is even the normalizing factor is $1$, so that $\Gc(V,\psi_f)$ is then the
quadratic Gauss sum used in \cite{Fl17}; when $d(f)$ is odd the two differ by the
fourth root of unity $\overline{\epsilon_q}$.
Only $\Gc$ occurs in the remainder of the paper.

\begin{lemma}\label{lem:phase-free}
\begin{enumerate}
\item 
For coprime $f_1,f_2\in\A^+$, one has
\begin{equation}\label{eq:Gc-multiplicativity-odd}
\Gc(V,\psi_{f_1f_2})=\Gc(V,\psi_{f_1})\Gc(V,\psi_{f_2}).
\end{equation}

\item 
Let $P\in\Pp$, $i\ge1$, and let $V\ne0$. 
Write $V=P^kV_1$ with $P\nmid V_1$. Then
\begin{equation}\label{eq:local-gauss-table}
\Gc(V,\psi_{P^i})=
\begin{cases}
0,&i\le k,\ i\ \mathrm{odd},\\
\phi(P^i),&i\le k,\ i\ \mathrm{even},\\
-|P|^{i-1},&i=k+1,\ i\ \mathrm{even},\\
\left(\frac{V_1}{P}\right)|P|^{i-1}\sqrt{|P|}, &i=k+1,\ i\ \mathrm{odd},\\
0,&i\ge k+2.
\end{cases}
\end{equation}

\item 
For every $f\in\A^+$, one has
\begin{equation}\label{eq:Gc-zero-odd}
\Gc(0,\psi_f)=\phi(f)\mathbf 1_{f=\square}.
\end{equation}
\end{enumerate}
\end{lemma}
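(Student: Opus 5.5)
The plan is to reduce everything to the unnormalized sum $G(V,f):=\sum_{u\bmod f}\psi_f(u)e(uV/f)$, so that $\Gc(V,\psi_f)=\overline{\epsilon_q}^{\,\nu(f)}G(V,f)$. I then track precisely how the phase $\overline{\epsilon_q}^{\,\nu(f)}$ interacts with the twisted multiplicativity of $G$ and with the local Gauss sum at a prime.

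\emph{Part (1).} For coprime $f_1,f_2\in\A^+$, write $u=u_1f_2+u_2f_1$ with $u_j$ running modulo $f_j$; by the Chinese remainder theorem this parametrizes residues modulo $f_1f_2$. Then $\psi_{f_1f_2}(u)=\psi_{f_1}(u_1f_2)\psi_{f_2}(u_2f_1)$, and $e(uV/f_1f_2)=e(u_1V/f_1)e(u_2V/f_2)$. This gives
$$G(V,f_1f_2)=\Bigl(\frac{f_2}{f_1}\Bigr)\Bigl(\frac{f_1}{f_2}\Bigr)G(V,f_1)G(V,f_2)=\eta_q^{d(f_1)d(f_2)}G(V,f_1)G(V,f_2),$$
using the reciprocity law stated in \S\ref{subsec:notation}. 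Next, $\eta_q^{d(f_1)d(f_2)}=\eta_q^{\nu_1\nu_2}$, where $\nu_j=\nu(f_j)$. Also $\nu(f_1f_2)=\nu_1+\nu_2-2\nu_1\nu_2$ and $\overline{\epsilon_q}^{\,2}=\overline{\eta_q}=\eta_q$. Hence $\overline{\epsilon_q}^{\,\nu_1+\nu_2}=\overline{\epsilon_q}^{\,\nu(f_1f_2)}\eta_q^{\nu_1\nu_2}$, and the reciprocity sign is exactly absorbed. This proves \eqref{eq:Gc-multiplicativity-odd}. It is the point where the normalization matters.

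\emph{Part (2).} Here $f=P^i$, $d=d(P)$, and $\psi_{P^i}(u)=(u/P)^i$.
\begin{itemize}
\item If $i\le k$, then $uV/P^i\in\A$, so $e\equiv1$. The sum is $0$ for $i$ odd, since it is a nontrivial character summed over units; for $i$ even it is $\phi(P^i)$, and then $\nu(P^i)=0$.
\item If $i$ is even and $i>k$, the sum is a Ramanujan sum over units modulo $P^i$. Writing $u=u_0+P^{i-1}t$ shows that it equals $-|P|^{i-1}$ when $i=k+1$ and $0$ when $i\ge k+2$.
\item If $i$ is odd and $i\ge k+1$, the same substitution reduces the sum to $|P|^{i-1}$ times a sum modulo $P$ when $i=k+1$, and gives $0$ otherwise.
\end{itemize}
When $i=k+1$ is odd, the remaining sum is $\sum_{u\bmod P}(u/P)e(uV_1/P)=(V_1/P)\,G(1,P)$.

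The key identity is therefore $G(1,P)=\epsilon_q^{\,d}\sqrt{|P|}$. Since $\nu(P^i)\equiv d$ and $\overline{\epsilon_q}^{\,d}\epsilon_q^{\,d}=1$, this gives the table \eqref{eq:local-gauss-table}. To prove it, I identify $\A/P\cong\mathbb F_{q^d}$. The additive character $a\mapsto e(a/P)$ is a nontrivial additive character of $\mathbb F_{q^d}$, of the form $\varphi(\operatorname{Tr}(ca))$ for some $c\neq0$. The map $u\mapsto(u/P)$ is the quadratic character of $\mathbb F_{q^d}^\times$, which is $\chi_2\circ\mathrm{N}$. I would then apply Hasse--Davenport together with the evaluation of $\epsilon_q$ from \cite{IR90}.

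This step is the main obstacle. One must determine the twist $c$, namely the residue of $1/P$ at infinity (essentially a derivative/trace-dual basis computation, with $c$ related to $P'$). One must then check that the resulting $(c/P)$ and the Hasse--Davenport sign $(-1)^{d-1}$ combine to give exactly $\epsilon_q^{\,d}$. I would first verify this for $d=1$, where $G(1,T-a)=\sum_\alpha\chi_2(\alpha)\varphi(\alpha)=\epsilon_q\sqrt q$ directly. For general $d$ I would use the known evaluation of the quadratic Gauss sum over $\mathbb F_q[T]$, for instance as in Florea's Lemma, carrying the $\epsilon_q$ factors through.

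\emph{Part (3).} By part (1) it suffices to treat $f=P^i$. With $V=0$ (formally $k=\infty$), the first two lines of the table give $0$ for $i$ odd and $\phi(P^i)$ for $i$ even; only the $i\le k$ cases were needed there, not the Gauss sum evaluation. Multiplying over the primes dividing $f$ gives $\phi(f)\mathbf 1_{f=\square}$, which is \eqref{eq:Gc-zero-odd}.
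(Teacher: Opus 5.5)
Parts (1) and (3) are fine. Part (1) is the paper's CRT-plus-reciprocity argument, and obtaining (3) multiplicatively from (1) and the $i\le k$ computation is a valid alternative to the paper's observation that $\psi_f$ is principal exactly when $f$ is a square. The prime-power reductions in (2) are also fine.

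The gap is the evaluation of the prime Gauss sum $G(1,P)$, which is the only place the constant field enters and the whole point of the lemma when $q\equiv3\bmod4$. Your key identity $G(1,P)=\epsilon_q^{\,d}\sqrt{|P|}$ is false when $\eta_q=-1$ and $d\equiv2,3\bmod4$. For instance, take $q=3$ and $P=T^2+1$, so that $\epsilon_3=i$. For $u=a+bT$ one has $e(u/P)=\varphi(b)$, and the squares in $(\A/P)^\times$ are $\pm1,\pm T$. Hence $G(1,P)=2-\varphi(1)-\varphi(2)=3$, whereas $\epsilon_3^{2}\cdot3=-3$.

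Your passage from that identity to the table is also invalid. The normalizing factor is $\overline{\epsilon_q}^{\,\nu(P)}$, and $\nu(P)\equiv d$ holds only modulo $2$. Since $\epsilon_q$ has order $4$ when $\eta_q=-1$, you may not replace $\overline{\epsilon_q}^{\,\nu(P)}$ by $\overline{\epsilon_q}^{\,d}$. Combined correctly, your identity would give $\Gc(1,\psi_P)=\epsilon_q^{\,d-\nu(P)}\sqrt{|P|}=\eta_q^{\lfloor d/2\rfloor}\sqrt{|P|}$, which contradicts the table. The two errors cancel each other, and that is the only reason your conclusion comes out right.

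The correct target is $G(1,P)=\epsilon_q^{\,d^2}\sqrt{|P|}$; after that, $d^2\equiv\nu(P)\bmod4$ finishes the proof. Proving it needs an ingredient your plan does not supply.
\begin{enumerate}
\item You need the twist explicitly: $e(u/P)=\varphi\bigl(\operatorname{Tr}_{\mathbb F_{q^d}/\mathbb F_q}(u(\theta)/P'(\theta))\bigr)$ for $d(u)<d$, where $\theta$ is a root of $P$. This gives $G(1,P)=\chi_{2,d}(P'(\theta))\,\tau_{q^d}$, with $\chi_{2,d}=\chi_2\circ N_{\mathbb F_{q^d}/\mathbb F_q}$ and $\tau_{q^d}$ the quadratic Gauss sum of $\mathbb F_{q^d}$.
\item Hasse--Davenport gives $\tau_{q^d}=(-1)^{d-1}\epsilon_q^{\,d}q^{d/2}$.
\item The missing idea is the quadratic character of $P'(\theta)$. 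Since $\operatorname{disc}(P)=(-1)^{d(d-1)/2}N_{\mathbb F_{q^d}/\mathbb F_q}(P'(\theta))$, the Pellet--Stickelberger theorem ($\chi_2(\operatorname{disc}P)=(-1)^{d-1}$ for irreducible $P$) yields $\chi_{2,d}(P'(\theta))=(-1)^{d-1}\eta_q^{d(d-1)/2}$.
\end{enumerate}
The factor $\eta_q^{d(d-1)/2}=\epsilon_q^{\,d^2-d}$ is exactly what your target omits. Checking $d=1$ cannot detect it. Appealing to Florea's evaluation does not help either: it is proved for $q\equiv1\bmod4$, where $\epsilon_q^2=1$ and hence $\epsilon_q^{\,d}=\epsilon_q^{\,d^2}$.
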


\begin{proof}
We begin with (1).  Let $f_1,f_2\in\A^{+}$ be coprime and write $u=u_1f_2+u_2f_1$, where
$u_j$ runs over a complete residue system modulo $f_j$; then $u$ runs over one modulo
$f_1f_2$, and $uV/(f_1f_2)=u_1V/f_1+u_2V/f_2$.  Since $u\equiv u_1f_2\bmod f_1$ and
$u\equiv u_2f_1\bmod f_2$, quadratic reciprocity gives
$$
\psi_{f_1f_2}(u)=\left(\frac{f_2}{f_1}\right)\left(\frac{f_1}{f_2}\right)
\psi_{f_1}(u_1)\psi_{f_2}(u_2)=\eta_q^{\,d(f_1)d(f_2)}\psi_{f_1}(u_1)\psi_{f_2}(u_2).
$$
Hence, for the sums occurring in \eqref{eq:Gc-definition-odd},
$$
\sum_{u\bmod f_1f_2}\psi_{f_1f_2}(u)\,e\!\left(\frac{uV}{f_1f_2}\right)
=\eta_q^{\,d(f_1)d(f_2)}\prod_{j=1}^{2}
\sum_{u\bmod f_j}\psi_{f_j}(u)\,e\!\left(\frac{uV}{f_j}\right).
$$
Multiplying both sides by $\overline{\epsilon_q}^{\,\nu(f_1f_2)}$ and using
$\eta_q=\epsilon_q^2$, we obtain
$$
\Gc(V,\psi_{f_1f_2})
=\epsilon_q^{\,\nu(f_1)+\nu(f_2)-\nu(f_1f_2)+2d(f_1)d(f_2)}\,
\Gc(V,\psi_{f_1})\,\Gc(V,\psi_{f_2}).
$$
Since $\nu(f_1f_2)=\nu(f_1)+\nu(f_2)-2\nu(f_1)\nu(f_2)$, the exponent equals
$2\bigl(\nu(f_1)\nu(f_2)+d(f_1)d(f_2)\bigr)$, and
$\nu(f_1)\nu(f_2)\equiv d(f_1)d(f_2)\bmod2$; hence the exponent is divisible by $4$.
As $\epsilon_q^4=1$, this proves \eqref{eq:Gc-multiplicativity-odd}.
In other words, the parity function $\nu$ satisfies
$\nu(f_1f_2)\equiv\nu(f_1)+\nu(f_2)+2d(f_1)d(f_2)\bmod4$,
which is exactly the relation needed to absorb the reciprocity phase.

We next evaluate $\Gc$ at a prime; this is the only point in the paper at which the
constant field enters.
Let $P\in\Pp$, put $d=d(P)$, and let $\theta$ be a zero of $P$.  For $u\in\A$ with
$d(u)<d$, expanding $u/P$ in partial fractions over $\mathbb F_{q^d}$ and reading off
the coefficient of $T^{-1}$ gives the residue--trace identity
\begin{equation*}
 e(u/P)=\varphi\!\left(
 \operatorname{Tr}_{\mathbb F_{q^d}/\mathbb F_q}
 \frac{u(\theta)}{P'(\theta)}\right).
\end{equation*}
Hence, if $\chi_{2,d}=\chi_2\circ N_{\mathbb F_{q^d}/\mathbb F_q}$ and $\tau_{q^d}$
denotes the quadratic Gauss sum on $\mathbb F_{q^d}$ with additive character
$\varphi\circ\operatorname{Tr}_{\mathbb F_{q^d}/\mathbb F_q}$, then
\eqref{eq:Gc-definition-odd} gives
\begin{equation}\label{eq:Gc-prime-trace}
 \Gc(1,\psi_P)=\overline{\epsilon_q}^{\,\nu(P)}\,\chi_{2,d}\bigl(P'(\theta)\bigr)\,\tau_{q^d}.
\end{equation}
The Hasse--Davenport lifting relation
\cite{IR90}*{Chapter~11, Section~4, pp.~163--165} gives
\begin{equation}\label{eq:HD-quadratic}
 \tau_{q^d}=(-1)^{d-1}\epsilon_q^d q^{d/2}.
\end{equation}
On the other hand, $\operatorname{disc}(P)=(-1)^{d(d-1)/2}N_{\mathbb F_{q^d}/\mathbb F_q}(P'(\theta))$, so the
Pellet--Stickelberger parity formula \cite{Sw62}*{Corollary~1}, applied to the
irreducible polynomial $P$, gives
\begin{equation}\label{eq:pellet-derivative}
 \chi_{2,d}\bigl(P'(\theta)\bigr)=(-1)^{d-1}\eta_q^{d(d-1)/2}.
\end{equation}
Substituting \eqref{eq:HD-quadratic} and \eqref{eq:pellet-derivative} into
\eqref{eq:Gc-prime-trace}, the two factors $(-1)^{d-1}$ cancel, and
$\eta_q=\epsilon_q^2$ turns $\eta_q^{d(d-1)/2}\epsilon_q^{d}$ into $\epsilon_q^{d^2}$.
Therefore
\begin{equation}\label{eq:Gc-prime-phase}
 \Gc(1,\psi_P)=\epsilon_q^{\,d^2-\nu(P)}\sqrt{|P|}=\sqrt{|P|},
\end{equation}
because $d^2\equiv\nu(P)\bmod4$ for every integer $d$, both sides being $0$ for $d$
even and $1$ for $d$ odd, and because $\epsilon_q^4=1$.
Thus the normalization in \eqref{eq:Gc-definition-odd} cancels the constant-field
phase exactly.  Compare \cite{Fl17}*{Section~9}, where the corresponding phase is
computed under the assumption that $q\equiv1\bmod4$.

We can now prove (2).  The reduction of the sum in \eqref{eq:Gc-definition-odd} at a
prime power to the case $i=1$ is the polynomial Ramanujan-sum computation of
\cite{Fl17}*{Lemma~3.2}, which uses no property of $q$ beyond oddness.  
Writing $V=P^kV_1$ with $P\nmid V_1$, that computation shows that the unnormalized sum
in \eqref{eq:Gc-definition-odd} with $f=P^i$ vanishes when $i\le k$ with $i$ odd and
when $i\ge k+2$, that it equals $\phi(P^i)$ when $i\le k$ with $i$ even, that it equals
$-|P|^{i-1}$ when $i=k+1$ is even, and that it equals $\left(\frac{V_1}{P}\right)|P|^{i-1}$
times the corresponding sum with $V=1$ and $f=P$ when $i=k+1$ is odd. 
In the first four alternatives the value either vanishes or has $i$ even, so
that $\nu(P^i)=0$ and the normalizing factor in \eqref{eq:Gc-definition-odd} is
trivial.  In the remaining alternative $i$ is odd, so that $\nu(P^i)=\nu(P)$ and the
normalizing factor $\overline{\epsilon_q}^{\,\nu(P^i)}$ equals the one attached to
$\Gc(1,\psi_P)$; hence \eqref{eq:Gc-prime-phase} gives
$$
\Gc(V,\psi_{P^i})=\left(\frac{V_1}{P}\right)|P|^{i-1}\Gc(1,\psi_P)
=\left(\frac{V_1}{P}\right)|P|^{i-1}\sqrt{|P|}.
$$
This proves \eqref{eq:local-gauss-table}.

Finally we prove (3).  Since $e(0)=1$, the sum in \eqref{eq:Gc-definition-odd} with
$V=0$ is $\sum_{u\bmod f}\psi_f(u)$.  The character $\psi_f$ is principal modulo $f$
precisely when $f$ is a square, in which case $d(f)$ is even, the normalizing factor
is trivial, and the sum equals $\phi(f)$; otherwise the sum vanishes.  This proves
\eqref{eq:Gc-zero-odd}.
\end{proof}

Lemma~\ref{lem:phase-free} allows the Poisson summation formula to be stated, for every
odd prime power $q$, with the single Gauss sum $\Gc$ throughout.

\begin{proposition}[Poisson summation]\label{prop:poisson}
Let $f\in\A^+$, and $m\ge0$.  
If $d(f)$ is even, then
\begin{align}\label{eq:poisson-even}
\sum_{H\in\A_m^+}\psi_f(H)
=\frac{q^m}{|f|}\Biggl(\Gc(0,\psi_f)+(q-1)\sum_{V\in\A^+_{\le d(f)-m-2}}\Gc(V,\psi_f) 
-\sum_{V\in\A^+_{d(f)-m-1}}\Gc(V,\psi_f)\Biggr).
\end{align}
If $d(f)$ is odd, then
\begin{equation}\label{eq:poisson-odd}
\sum_{H\in\A_m^+}\psi_f(H)=\frac{q^m\sqrt q}{|f|}\sum_{V\in\A^+_{d(f)-m-1}}\Gc(V,\psi_f).
\end{equation}
\end{proposition}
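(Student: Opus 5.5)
The plan is the standard finite Fourier expansion modulo $f$, followed by an evaluation of the resulting additive-character sum over monic $H$ of degree $m$. The phase normalization only enters at the very end, through $\epsilon_q$.

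\emph{Step 1 (Fourier expansion).} Write $G(V,\psi_f)=\sum_{u\bmod f}\psi_f(u)e(uV/f)$ for the unnormalized sum, so that $\Gc(V,\psi_f)=\overline{\epsilon_q}^{\,\nu(f)}G(V,\psi_f)$. Since $\psi_f$ is periodic modulo $f$ and $u\mapsto e(uV/f)$, for $V$ running modulo $f$, are the characters of $\A/f\A$, orthogonality gives
$$
\psi_f(H)=\frac1{|f|}\sum_{V\bmod f}G(V,\psi_f)\,e\!\left(-\frac{HV}{f}\right).
$$
Replacing $V$ by $-V$ and taking representatives with $d(V)<d(f)$ (or $V=0$), we obtain
$$
\sum_{H\in\A^+_m}\psi_f(H)=\frac1{|f|}\sum_{V}G(-V,\psi_f)\,S(V),\qquad S(V)=\sum_{H\in\A^+_m}e\!\left(\frac{HV}{f}\right).
$$

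\emph{Step 2 (the sum $S(V)$).} Write $H=T^m+a$ with $d(a)<m$. The map $a\mapsto e(aV/f)$ is an additive character on polynomials of degree $<m$. It is trivial exactly when the coefficients of $T^{-1},\dots,T^{-m}$ in $V/f$ vanish, i.e.\ when $d(V)\le d(f)-m-1$; otherwise it sums to zero. In the trivial case,
$$
S(V)=q^m\,e\!\left(\frac{T^mV}{f}\right),
$$
and $e(T^mV/f)$ equals $1$ if $d(V)\le d(f)-m-2$ and $\varphi(\operatorname{sgn}V)$ if $d(V)=d(f)-m-1$. This uses that $f$ is monic, so the $T^{-1}$-coefficient of $T^mV/f$ is the leading coefficient of $V$.

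\emph{Step 3 (scalars and parity).} For nonzero $V$, write $V=cV'$ with $c\in\mathbb F_q^\times$ and $V'$ monic. The substitution $u\mapsto c^{-1}u$ gives
$$
G(-cV',\psi_f)=\psi_f(-c)\,G(V',\psi_f),\qquad \psi_f(-c)=\chi_2(-c)^{d(f)}.
$$

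\emph{Case $d(f)$ even.} The factor $\psi_f(-c)$ is $1$. Summing over $c$ gives $q-1$ for $d(V')\le d(f)-m-2$ and $\sum_c\varphi(c)=-1$ for $d(V')=d(f)-m-1$. The $V=0$ term gives $G(0,\psi_f)$, and $\Gc=G$ since $\nu(f)=0$. This yields \eqref{eq:poisson-even}.

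\emph{Case $d(f)$ odd.} The $V=0$ term vanishes by \eqref{eq:Gc-zero-odd}, since $f$ is not a square. Lower-degree $V$ contribute $\sum_c\chi_2(-c)=0$. The top degree contributes
$$
\sum_c\chi_2(-c)\varphi(c)=\chi_2(-1)\,\epsilon_q\sqrt q .
$$
Then $G=\epsilon_q\Gc$ produces the factor $\chi_2(-1)\epsilon_q^2\sqrt q=\eta_q^2\sqrt q=\sqrt q$, which gives \eqref{eq:poisson-odd}.

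The main obstacle is the sign and phase bookkeeping in Step 3 for odd $d(f)$. One must track the sign convention of the Fourier inversion (the factor $\chi_2(-1)$), the Gauss sum $\epsilon_q$ over $\mathbb F_q$, and the normalization factor $\overline{\epsilon_q}$, and check that they combine to exactly $1$ using $\epsilon_q^2=\eta_q=\chi_2(-1)$. I would verify this first, carefully, since a misplaced conjugate would leave a stray fourth root of unity.
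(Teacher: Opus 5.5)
Your proposal is correct and follows essentially the same route as the paper. Both arguments use the Fourier expansion modulo $f$, evaluate the additive sum over monic $H$ of degree $m$, split $V=cV'$ and sum over $c\in\mathbb F_q^\times$, and let the phases combine via $\epsilon_q^2=\eta_q=\chi_2(-1)$. The differences are cosmetic: you carry the sign inside $G(-V,\psi_f)$ rather than as the prefactor $\psi_f(-1)$, and your Step 2 handles $m\ge d(f)$ uniformly (only $V=0$ survives) instead of treating that case separately.
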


\begin{proof}
The argument is that of \cite{Fl17}*{Proposition~3.1}, carried out with $\Gc$ in place of
the Gauss sum used there.
Suppose first that $m\ge d(f)$.  Every residue class modulo $f$ then has exactly
$q^{m-d(f)}$ monic representatives of degree $m$, so that
$$
\sum_{H\in\A_m^+}\psi_f(H)=q^{m-d(f)}\sum_{u\bmod f}\psi_f(u),
$$ 
and \eqref{eq:Gc-zero-odd} gives \eqref{eq:poisson-even}; when $d(f)$ is odd, $f$ is not a
square and both sides of \eqref{eq:poisson-odd} vanish.

Suppose that $0\le m\le d(f)-1$.  
Additive-character orthogonality gives
\begin{equation}\label{eq:poisson-proof-start}
\sum_{H\in\A_m^+}\psi_f(H)
=\frac{\psi_f(-1)\,\epsilon_q^{\,\nu(f)}}{|f|}\sum_{u\bmod f}S_m(u)\,\Gc(u,\psi_f),  \quad
\text{ where }~~ S_m(u):=\sum_{H\in\A_m^+}e\!\left(\frac{u H}{f}\right),
\end{equation}
the factor $\epsilon_q^{\,\nu(f)}$ arising because the Fourier coefficient of $\psi_f$ is
the unnormalized sum in \eqref{eq:Gc-definition-odd}.
For $\alpha\in\mathbb F_q^\times$, we have $\psi_f(\alpha)=\chi_2(\alpha)^{d(f)}$,
$\psi_f(-1)=\eta_q^{d(f)}$ and $\Gc(\alpha V,\psi_f)=\psi_f(\alpha)\Gc(V,\psi_f)$,
the last relation being unaffected by the constant normalizing factor.
Writing $H=T^m+\sum_{j<m}\alpha_jT^j$ and summing over the coefficients
$\alpha_j\in\mathbb F_q$, additive-character orthogonality gives
$$
S_m(u)=
\begin{cases}
q^m\,\varphi\!\left([T^{-1}]\dfrac{uT^m}{f}\right), & d(u)\le d(f)-m-1,\\[2mm]
0,&\text{otherwise};
\end{cases}
$$
in particular $S_m(0)=q^m$.
The term $u=0$ contributes $q^m\Gc(0,\psi_f)$.  Every nonzero residue with
$S_m(u)\ne0$ is uniquely of the form $u=\alpha V$ with $\alpha\in\mathbb F_q^\times$ and
$V\in\A^+$ of degree $d(V)\le d(f)-m-1$; the sum over these $u$ therefore becomes a
double sum over $V$ and $\alpha$, and $\Gc(\alpha V,\psi_f)=\psi_f(\alpha)\Gc(V,\psi_f)$
separates the two variables.  If $d(V)\le d(f)-m-2$, then
$[T^{-1}]\frac{\alpha VT^m}{f}=0$ and $S_m(\alpha V)=q^m$; if $d(V)=d(f)-m-1$, then the
monicity of $f$ and $V$ gives $[T^{-1}]\frac{\alpha VT^m}{f}=\alpha$ and
$S_m(\alpha V)=q^m\varphi(\alpha)$.  Hence
\begin{equation}\label{eq:poisson-split}
\sum_{u\bmod f}S_m(u)\Gc(u,\psi_f)
=q^m\left(\Gc(0,\psi_f)
+\Sigma_0\sum_{V\in\A^+_{\le d(f)-m-2}}\Gc(V,\psi_f)
+\Sigma_1\sum_{V\in\A^+_{d(f)-m-1}}\Gc(V,\psi_f)\right),
\end{equation}
where
$$
\Sigma_0:=\sum_{\alpha\in\mathbb F_q^\times}\psi_f(\alpha),\qquad
\Sigma_1:=\sum_{\alpha\in\mathbb F_q^\times}\psi_f(\alpha)\varphi(\alpha).
$$

If $d(f)$ is even, then $\psi_f(-1)=1$, $\psi_f(\alpha)=1$ for every
$\alpha\in\mathbb F_q^\times$, and $\nu(f)=0$, so that the prefactor in
\eqref{eq:poisson-proof-start} is $1$.  Moreover $\Sigma_0=q-1$ and
$\Sigma_1=\sum_{\alpha\in\mathbb F_q^\times}\varphi(\alpha)=-1$, and
\eqref{eq:poisson-split} gives \eqref{eq:poisson-even}.

If $d(f)$ is odd, then $f$ is not a square, so that $\Gc(0,\psi_f)=0$ by
\eqref{eq:Gc-zero-odd} and the first term of \eqref{eq:poisson-split} is absent.  Here
$\psi_f(\alpha)=\chi_2(\alpha)$, so that
$$
\Sigma_0=\sum_{\alpha\in\mathbb F_q^\times}\chi_2(\alpha)=0,\qquad
\Sigma_1=\sum_{\alpha\in\mathbb F_q^\times}\chi_2(\alpha)\varphi(\alpha)=\epsilon_q\sqrt q,
$$
and only the sum over $V$ of degree exactly $d(f)-m-1$ survives.  Moreover
$\psi_f(-1)=\eta_q=\epsilon_q^2$ and $\nu(f)=1$, so that the prefactor in
\eqref{eq:poisson-proof-start} equals $\epsilon_q^3/|f|$.  Multiplying it by the factor
$\epsilon_q\sqrt q$ carried by $\Sigma_1$ gives $\epsilon_q^4\sqrt q/|f|=\sqrt q/|f|$,
since $\epsilon_q^4=1$; together with the factor $q^m$ in \eqref{eq:poisson-split} this proves \eqref{eq:poisson-odd}.
\end{proof}

\subsection{Decomposition of the moment}\label{subsec:moment-decomposition}

For $N\in\{g,g-1\}$, the two ranges occurring in \eqref{eq:afe}, define
\begin{equation}\label{eq:SN-odd-definition}
\mathcal S_N:=\sum_{f\in\A^+_{\le N}}\frac{1}{\sqrt{|f|}}\sum_{D\in\Hh_{2g+1}}\chi_D(f).
\end{equation}
Substituting \eqref{eq:character-average} into \eqref{eq:SN-odd-definition},
and separating the single endpoint $d(C)=g$ in the first sum, we have 
\begin{align}\label{eq:SN-after-sieve}
\mathcal S_N
=\sum_{f\in\A^+_{\le N}}\frac1{\sqrt{|f|}}\sum_{\substack{C\in\A^+_{\le g-1}\\C\mid f^\infty}}
\Bigg(\sum_{H\in\A^+_{2g+1-2d(C)}}\psi_f(H)-q\sum_{H\in\A^+_{2g-1-2d(C)}}\psi_f(H)\Bigg)
+O_{q,\e}\!\left(q^{N/2+\e g}\right).
\end{align}
Indeed, when $d(C)=g$ the inner sum is $\sum_{H\in\A_1^+}\psi_f(H)$, of absolute value at
most $q$; together with the divisor bound
$\#\{C\in\A_g^+:C\mid f^\infty\}\ll_\e q^{\e(g+d(f))}$ and
$\sum_{f\in\A^+_{\le N}}|f|^{-1/2}\ll q^{N/2}$, this bounds the omitted endpoint by
$q^{N/2+\e g+\e N}$, which is $O_{q,\e}(q^{N/2+\e g})$ after renaming $\e$.

Split the main term of \eqref{eq:SN-after-sieve} according to the parity of $d(f)$,
writing $\mathcal S_{N,\rm e}$ for the part with $d(f)$ even and $\mathcal S_{N,\rm o}$
for the part with $d(f)$ odd, so that
$\mathcal S_N=\mathcal S_{N,\rm e}+\mathcal S_{N,\rm o}+O_{q,\e}\bigl(q^{N/2+\e g}\bigr)$
with the same error term as in \eqref{eq:SN-after-sieve}. 
We use the Poisson summation formula \eqref{eq:poisson-even} for the sum over $H$ to write $\mathcal S_{N,\rm e}$ as
\begin{align} \label{eq:SN-even-sector} 
\mathcal S_{N,\mathrm e} 
&= q^{2g} \sum_{\substack{f\in\A^+_{\le N}\\ d(f)\ {\rm even}}} \frac{1}{|f|^{3/2}} 
\sum_{\substack{C\in\A^+_{\le g-1}\\ C\mid f^\infty}} \frac{1}{|C|^2} 
\Bigg( (q-1)\Gc(0,\psi_f) +q(q-1) \sum_{V\in\A^+_{\le d(f)-2g+2d(C)-3}} \Gc(V,\psi_f) \notag\\ 
&\hspace{-0.5em} -q \sum_{V\in\A^+_{d(f)-2g+2d(C)-2}} \Gc(V,\psi_f) -(q-1) \sum_{V\in\A^+_{\le d(f)-2g+2d(C)-1}} \Gc(V,\psi_f) 
+ \sum_{V\in\A^+_{d(f)-2g+2d(C)}} \Gc(V,\psi_f) \Bigg). 
\end{align}
We decompose $\mathcal S_{N,\mathrm e}$ according to the nature of the dual variable $V$ in \eqref{eq:SN-even-sector} 
as $\mathcal S_{N,\mathrm e}=\mathcal M_N+\mathcal S_N(V=\square)+\widetilde{\mathcal S}_N(V\ne\square)$, where 
$\mathcal M_N$ denotes the contribution of the zero frequency $V=0$,
$\mathcal S_N(V=\square)$ denotes the contribution of the nonzero square frequencies,
and $\widetilde{\mathcal S}_N(V\ne\square)$ denotes the contribution of the nonsquare frequencies. 

We use the Poisson summation formula \eqref{eq:poisson-odd} for the sum over $H$ to write $\mathcal S_{N,\rm o}$ as 
\begin{align*}
\mathcal S_{N,\rm o}=q^{2g+3/2}
\sum_{\substack{f\in\A^+_{\le N}\\d(f)\ \mathrm{odd}}}\frac1{|f|^{3/2}}
\sum_{\substack{C\in\A^+_{\le g-1}\\C\mid f^\infty}}\frac1{|C|^2}
\Bigg(\sum_{V\in\A^+_{d(f)-2g+2d(C)-2}}\Gc(V,\psi_f)
-\frac1q \sum_{V\in\A^+_{d(f)-2g+2d(C)}}\Gc(V,\psi_f)\Bigg).
\end{align*}
In the preceding equation, when $d(f)$ is odd, $d(V)$ is also odd, so $V$ cannot be a square. 
Define $\mathcal S_N(V\ne\square) = \mathcal S_{N,\rm o}+\widetilde{\mathcal S}_N(V\ne\square)$.

By \eqref{eq:afe} we have $M_1(g)=\mathcal S_g+\mathcal S_{g-1}$; combining this with the
two decompositions above gives, for every $\e>0$,
\begin{align}\label{eq:moment-decomposition-short}
M_1(g)=\sum_{N\in\{g,g-1\}}\bigl(\mathcal M_N+\mathcal S_N(V=\square)+\mathcal S_N(V\ne\square)\bigr)
+O_{q,\e}\!\left(q^{g(1+\e)/2}\right).
\end{align}

\section{Main term, dual contributions, and proof of the main theorem}
\label{sec:principal-nonsquare}

The aim of this section is to prove Theorem~\ref{thm:main}, granted the square-dual
evaluation stated below as Proposition~\ref{prop:square-dual-evaluation} and proved in
\S\ref{sec:periodic-term}.  Starting from
\eqref{eq:moment-decomposition-short}, we evaluate the zero-frequency contribution,
record the square-dual evaluation, and bound the nonsquare-dual contribution.  The
zero-frequency evaluation is deliberately left incomplete: it produces a contour
integral $\mathcal R_0(g;\rho)$ which is retained rather than estimated, because the
square-dual contribution produces $-\mathcal R_0(g;\rho)$ and the two cancel exactly in
the final assembly.

\subsection{Zero-frequency contribution}

By \eqref{eq:Gc-zero-odd}, the zero frequency occurs only when $f$ is a square, so that
$d(f)$ is even, $\nu(f)=0$ and $\Gc(0,\psi_f)=\phi(f)$.  The phase normalization of
\S\ref{sec:standard-reductions} therefore plays no role here, and the main-term
argument of \cite{Fl17}*{Section~5} applies verbatim.

Put
\begin{equation*}
\mathcal C(u):=\prod_{P\in\Pp}\left(1-\frac{u^{d(P)}}{|P|(|P|+1)}\right),
\end{equation*}
a product that converges absolutely and locally uniformly for $|u|<q$ and therefore
defines a holomorphic nonvanishing function there.  Set
\begin{equation}\label{eq:principal-term}
\mathcal Q_g:=\frac{\mathscr P(1)}{2\zA(2)}q^{2g+1}\left(2g+2+\frac4{\log q}\frac{\mathscr P'}{\mathscr P}(1)\right),
\end{equation}
and, for $1<\rho<q$,
\begin{equation}\label{eq:R0}
\mathcal R_0(g;\rho):=\frac{q^{2g+1}}{\zA(2)}\sum_{N\in\{g,g-1\}}\frac1{2\pi i}
\oint_{|u|=\rho}\frac{\mathcal C(u)}{(1-u)^2u^{\lfloor N/2\rfloor+1}}\,du.
\end{equation}

\begin{proposition}\label{prop:zero-frequency}
For every $\e>0$ and every $\rho$ with $1<\rho<q$, one has
\begin{equation*}
\mathcal M_g+\mathcal M_{g-1}=\mathcal Q_g+\mathcal R_0(g;\rho)+O_{q,\e}(q^{\e g}).
\end{equation*}
\end{proposition}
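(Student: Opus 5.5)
The plan is to evaluate $\mathcal M_N$ exactly as a truncated Dirichlet series in a square variable, identify its generating function as $\mathcal C(u)/(1-u)$, and then move the contour past the double pole at $u=1$. The residue there should give $\mathcal Q_g$, and the remaining integral on $|u|=\rho$ is $\mathcal R_0(g;\rho)$ by definition.

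\emph{Isolating $\mathcal M_N$.} In \eqref{eq:SN-even-sector} only the term $(q-1)\Gc(0,\psi_f)$ involves $V=0$. By \eqref{eq:Gc-zero-odd} it survives only for $f=\ell^2$ with $\ell\in\A^+_{\le\lfloor N/2\rfloor}$. Since $\phi(\ell^2)=|\ell|\phi(\ell)$ and $C\mid f^\infty$ is the same as $C\mid\ell^\infty$, we get
$$
\mathcal M_N=q^{2g}(q-1)\sum_{\ell\in\A^+_{\le\lfloor N/2\rfloor}}\frac{\phi(\ell)}{|\ell|^2}\sum_{\substack{C\in\A^+_{\le g-1}\\ C\mid\ell^\infty}}\frac1{|C|^2}.
$$
I first complete the $C$-sum to all $C\mid\ell^\infty$. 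The added terms have $d(C)\ge g$, so each is at most $q^{-2g}$. Their number is $\ll_\e q^{\e g}$ by the divisor bound already used after \eqref{eq:SN-after-sieve}, after replacing the $C$-sum over $d(C)\ge g$ by a Rankin-type bound. Since $\sum_{d(\ell)\le g/2}\phi(\ell)|\ell|^{-2}\ll g$, the completion costs $O_{q,\e}(q^{\e g})$. The completed inner sum is $\prod_{P\mid\ell}(1-|P|^{-2})^{-1}$. I also note that $q^{2g}(q-1)=q^{2g+1}/\zA(2)$.

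\emph{Generating function.} Put $F(u)=\sum_{\ell\in\A^+}\frac{\phi(\ell)}{|\ell|^2}\prod_{P\mid\ell}(1-|P|^{-2})^{-1}u^{d(\ell)}$. With $x=u^{d(P)}/|P|$, the Euler factor at $P$ is
$$
1+\frac{1-|P|^{-1}}{1-|P|^{-2}}\cdot\frac{x}{1-x}=\frac{1-x/(|P|+1)}{1-x}.
$$
Hence $F(u)=\mathcal Z_\A(u/q)\,\mathcal C(u)=\mathcal C(u)/(1-u)$, which is meromorphic on $|u|<q$ with only a simple pole at $u=1$. The partial sum of the coefficients up to degree $M=\lfloor N/2\rfloor$ equals $\frac1{2\pi i}\oint_{|u|=r}\frac{\mathcal C(u)}{(1-u)^2u^{M+1}}\,du$ for small $r$. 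I then enlarge the circle to $|u|=\rho$ with $1<\rho<q$, crossing only the double pole at $u=1$. This gives
$$
\text{partial sum}=(M+1)\mathcal C(1)-\mathcal C'(1)+\frac1{2\pi i}\oint_{|u|=\rho}\frac{\mathcal C(u)}{(1-u)^2u^{M+1}}\,du,
$$
because the residue at $u=1$ is $\frac{d}{du}\bigl(\mathcal C(u)u^{-M-1}\bigr)\big|_{u=1}$ and it enters with a minus sign.

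\emph{Assembly.} Summing over $N\in\{g,g-1\}$ uses $\lfloor g/2\rfloor+\lfloor(g-1)/2\rfloor=g-1$, so the residues total $(g+1)\mathcal C(1)-2\mathcal C'(1)$. The substitution $u=q^{1-s}$ gives $\mathcal C(q^{1-s})=\mathscr P(s)$, so $\mathcal C(1)=\mathscr P(1)$ and $\mathcal C'(1)=-\mathscr P'(1)/\log q$. Multiplying by $q^{2g+1}/\zA(2)$ yields exactly $\mathcal Q_g$ in \eqref{eq:principal-term}, and the contour integrals give $\mathcal R_0(g;\rho)$ in \eqref{eq:R0}.

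The only delicate point is keeping the completion error in the $C$-sum at $O(q^{\e g})$ rather than something larger. This needs the divisor bound together with the decay $|C|^{-2}\le q^{-2g}$, which exactly cancels the prefactor $q^{2g}$. Everything else is a bookkeeping check of signs and of the floor-function identity.
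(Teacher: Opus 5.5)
Your proposal is correct and follows the paper's proof: you write $f=\ell^2$, complete the $C$-sum, identify the generating function as $\mathcal C(u)/(1-u)$ to reach the one-variable integral \eqref{eq:M-N-one-variable} (which the paper imports from Florea's Section~5 rather than deriving), then cross the double pole at $u=1$ and sum over $N$ exactly as the paper does. The only blemish is in the completion step: the set of $C\mid\ell^\infty$ with $d(C)\ge g$ is infinite, so the divisor bound must be applied degree by degree with weight $q^{-2n}$ for $n\ge g$, or via the Rankin bound you mention, at the cost of a harmless extra factor $|\ell|^{\e}$, and this still gives $O_{q,\e}(q^{\e g})$ after renaming $\e$.
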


\begin{proof}
Writing $f=l^2$, completing the divisor sum by the Rankin estimate of
\cite{Fl17}*{Section~5}, and rearranging the resulting Euler product reduce
$\mathcal M_N$ to a contour integral in the single variable $u$: for $N\in\{g,g-1\}$ and
$0<r<1$,
\begin{equation}\label{eq:M-N-one-variable}
\mathcal M_N=\frac{q^{2g+1}}{\zA(2)}\frac1{2\pi i}
\oint_{|u|=r}\frac{\mathcal C(u)}{(1-u)^2u^{\lfloor N/2\rfloor+1}}\,du+O_{q,\e}(q^{\e g}),
\end{equation}
where $u$ is scaled so that the principal pole lies at $u=1$ rather than at $u=q^{-1}$.
Everything up to \eqref{eq:M-N-one-variable} is taken from \cite{Fl17}*{Section~5}; the
remainder of the proof, namely shifting the contour across the double pole at $u=1$ and
summing over $N$, is carried out here.

Move the contour in \eqref{eq:M-N-one-variable} from $|u|=r$ to $|u|=\rho$.  Since
$\mathcal C$ is holomorphic and nonvanishing on $|u|<q$ and the only pole crossed is the
double pole at $u=1$, 
$$
-\Res_{u=1}\frac{\mathcal C(u)}{(1-u)^2u^{\lfloor N/2\rfloor+1}}=(\lfloor N/2\rfloor+1)\mathcal C(1)-\mathcal C'(1).
$$
Hence
\begin{equation}\label{eq:M-N-residue-decomposition}
\mathcal M_N
=\frac{q^{2g+1}}{\zA(2)}\left(\bigl(\lfloor N/2\rfloor+1\bigr)\mathcal C(1)-\mathcal C'(1)
+\frac1{2\pi i}\oint_{|u|=\rho}\frac{\mathcal C(u)}{(1-u)^2u^{\lfloor N/2\rfloor+1}}\,du\right)+O_{q,\e}(q^{\e g}).
\end{equation}

Sum \eqref{eq:M-N-residue-decomposition} over $N\in\{g,g-1\}$.  Since
$\lfloor g/2\rfloor+\lfloor(g-1)/2\rfloor=g-1$, the two residue terms contribute
$\frac{q^{2g+1}}{\zA(2)}\bigl((g+1)\mathcal C(1)-2\mathcal C'(1)\bigr)$.  Comparing the
Euler products defining $\mathscr P$ and $\mathcal C$ gives
$\mathscr P(s)=\mathcal C(q^{1-s})$, whence $\mathcal C(1)=\mathscr P(1)$ 
and $-\frac{\mathcal C'(1)}{\mathcal C(1)}=\frac1{\log q}\frac{\mathscr P'}{\mathscr P}(1)$,
and therefore this contribution equals $\mathcal Q_g$.  The two displaced integrals sum
to $\mathcal R_0(g;\rho)$ by \eqref{eq:R0}, and the two error terms are
$O_{q,\e}(q^{\e g})$; the proposition follows.
\end{proof}

We retain the displaced contour $\mathcal R_0(g;\rho)$ explicitly because it will cancel
with the principal square-dual contribution in \S\ref{subsec:secondary-reduction}.

\subsection{Square-dual contribution}\label{subsec:square-dual}

The nonzero square-dual contribution is the source of the periodic secondary term in Theorem~\ref{thm:main}.  
In the odd-degree sector of \S\ref{subsec:moment-decomposition} the dual ranges
satisfy $d(V)\equiv d(f)\bmod2$, so every dual variable arising there is a nonsquare.
Hence nonzero square frequencies occur only in the even-degree sector \eqref{eq:SN-even-sector}.  
For such moduli, $\nu(f)=0$ in \eqref{eq:Gc-definition-odd}, so that $\Gc(V,\psi_f)$
is the unnormalized quadratic Gauss sum used in \cite{Fl17}*{Section~6}.
Thus the phase normalization required for an arbitrary odd prime power does not affect the square-dual calculation.

The evaluation is the counterpart of \cite{Fl17}*{Section~6}.  
The cumulative rearrangement of the square ranges, the associated bivariate Euler product, and the deformation of 
the principal contour are the same as in Florea's argument after the change of variables and notation used here.  
The principal square-dual branch cancels the retained zero-frequency contour $\mathcal R_0(g;\rho)$.  
The point at which the present analysis differs is the secondary-pole bookkeeping:
the secondary circle contains three double poles of the same modulus, and all three must
be retained.

With $\alpha_m$ and $\beta_m$ as in \eqref{eq:alpha-periodic} and
\eqref{eq:beta-periodic}, set
\begin{equation}\label{eq:Sodd-def}
\Sodd(g):=q^{2g/3}\bigl(\alpha_{g\bmod3}\,g+\beta_{g\bmod3}\bigr).
\end{equation}
The following proposition records the square-dual evaluation used in the proof of
Theorem~\ref{thm:main}.  Its proof, including the Euler-product reduction and
the residue calculation at all three secondary points, is given in \S\ref{sec:periodic-term}.

\begin{proposition} \label{prop:square-dual-evaluation}
For every $\e>0$ and every $\rho$ with $1<\rho<q$, one has
\begin{equation*}
\mathcal R_0(g;\rho)+\mathcal S_g(V=\square)+\mathcal S_{g-1}(V=\square)
=\Sodd(g)+O_{q,\e}\!\left(q^{g(1+\e)/2}\right).
\end{equation*}
\end{proposition}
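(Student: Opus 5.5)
We isolate the square frequencies $V=l^2$ ($l\in\A^+$) in the even sector \eqref{eq:SN-even-sector}, following \cite{Fl17}*{Section~6}. Since $d(f)$ is even there, $\nu(f)=0$, so Lemma~\ref{lem:phase-free} applies with trivial normalization. The five $V$-sums in \eqref{eq:SN-even-sector} are rearranged cumulatively, so that the square frequencies enter through the conditions $2d(l)\le d(f)-2g+2d(C)+\text{const}$. Using the multiplicativity \eqref{eq:Gc-multiplicativity-odd} and the local table \eqref{eq:local-gauss-table}, the sum over $f$, over $C\mid f^\infty$ and over $l$ factors as a bivariate Euler product $\sum_{f,C,l}\Gc(l^2,\psi_f)|f|^{-3/2}|C|^{-2}x^{d(f)}w^{d(l)}$. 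For $V=l^2$ every exponent $k$ is even, so only the rows $i\le k$ with $i$ even and $i=k+1$ odd occur, and in the latter $(V_1/P)=1$. The degree constraints are then encoded by Perron-type contour integrals in $x$ and $w$, which produces a double contour integral
\[
\frac{1}{(2\pi i)^2}\oint\oint \frac{\mathcal F(x,w)}{(1-x)(1-\cdots)\,x^{N+1}w^{\cdots}}\,dw\,dx ,
\]
with $\mathcal F$ an explicit Euler product. Factoring out the $\zA$-type poles, the remaining product converges absolutely in a region slightly beyond the secondary circle. We carry this out to the point of matching Florea's display factor by factor.

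Next we deform contours. The first deformation, in the variable attached to $l$, picks up the principal pole. Its residue, summed over $N\in\{g,g-1\}$, should equal $-\mathcal R_0(g;\rho)$ exactly. We check this by comparing the residue with the integrand of \eqref{eq:R0}: after the substitution $u\leftrightarrow$ (the principal variable), the Euler factor must collapse to $\mathcal C(u)/\zA(2)$. We then choose the remaining contours so that, after combining, the integrand in the surviving variable $z$ has the denominator $(1-q^4z^3)^2$ together with factors holomorphic for $|z|<q^{-4/3-\delta'}$ slightly beyond it. Expanding the contour from a small circle to $|z|=q^{-3/2-\e}$ (i.e.\ to size $q^{g(1+\e)/2}$ in the $g$-aspect) crosses exactly the three double poles $z_k=q^{-4/3}\omega^k$. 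The remaining integral is bounded trivially by $O_{q,\e}(q^{g(1+\e)/2})$, using the convexity/Lindelöf-type bound for the Euler product on that circle.

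For the residues, at $z_k$ the integrand is $G(z)z^{-n(g)}(1-q^4z^3)^{-2}$, where $n(g)$ is linear in $g$. Since $1-q^4z^3=(1-z/z_0)(1-z/z_1)(1-z/z_2)$, the residue at $z_k$ equals
\[
z_k^{-n(g)}\bigl(a_k\,n(g)+b_k\bigr),
\]
with $a_k,b_k$ given by $G(z_k)$, $G'(z_k)$ and elementary factors. Summing over $N$ and $k$ gives $q^{2g/3}\sum_k\omega^{k(g+1)}(A_kg+B_k)$. We set $\alpha_m=\sum_k\omega^{k(m+1)}A_k$ and $\beta_m=\sum_k\omega^{k(m+1)}B_k$, which are consistent with \eqref{eq:alpha-periodic}, \eqref{eq:beta-periodic}. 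These are real because $G$ has real Taylor coefficients, so $A_2=\overline{A_1}$. This gives $\Sodd(g)$.

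The main obstacle is the bookkeeping in the second step. The floor functions $\lfloor N/2\rfloor$, the parity splitting of $d(f)$, and the two $N$ values must all be tracked precisely enough to get the exact exponent $n(g)$ and the phase $\omega^{k(g+1)}$. We also need to verify that no other singularities (for instance from the Euler product of $\mathcal F$) lie in the annulus $q^{-3/2}<|z|\le q^{-4/3}$. I would handle this first by computing $\mathcal F$ explicitly locally at each $P$ and extracting the factor $\zA$-powers that produce $(1-q^4z^3)^{-2}$.
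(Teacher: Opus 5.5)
Your outline has the right overall shape (a Perron reduction, a principal pole cancelling $\mathcal R_0(g;\rho)$, and three double poles on $|z|=q^{-4/3}$). However, the contour analysis that actually produces $\Sodd(g)$ with the stated error is set up incorrectly.

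\textbf{The final contour shift goes the wrong way.} Since $|z_k|=q^{-4/3}>q^{-3/2}>q^{-3/2-\e}$, moving a contour to $|z|=q^{-3/2-\e}$ crosses none of the secondary points. Likewise, a holomorphy region $|z|<q^{-4/3-\delta'}$ does not contain them. More to the point, on $|z|=R$ the factor $z^g(q^4z^3)^{-\lfloor N/2\rfloor}$ makes the trivial bound for the integral of $q^{2g+1}F(z)$ of order $R^{-g/2}$. On $|z|=q^{-3/2-\e}$ this is about $q^{3g/4}$, which is larger than the secondary term itself. An error $O_{q,\e}(q^{g(1+\e)/2})$ forces $R=q^{-1-\eta}$.

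So the deformation must go \emph{outward}, from the Perron contour $|z|=q^{-3/2}$ to $|z|=q^{-1-\eta}$. The essential input is that $\prod_P D_P(z,qz)$ is absolutely convergent and bounded on the whole annulus $q^{-2}<|z|<q^{-1}$; this is \eqref{eq:D-convergence-region} with $w=qz$. Holomorphy only ``slightly beyond'' the secondary circle would leave an error of size $q^{(2/3-c)g}$. You also cannot start from a small circle: the product diverges for $|z|\le q^{-2}$, and the singularities at $z=0$ and $z=q^{-2}$ must stay inside the initial contour. No Lindel\"of-type bound is needed, since no $L$-function survives in the square-dual sector.

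\textbf{Your steps never produce the one-variable secondary integral.} In the paper, $|z|=q^{-3/2}$ is held fixed and the $f$-variable $w$ is shifted from $|w|=r$ to $|w|=q^{-1/4-\delta}$. This crosses two poles:
\begin{enumerate}
\item $w=q^{-1}$, whose residue $A_N$, after moving to $|z|=\rho^{-1}$ and using \eqref{eq:B-C-identity}, gives $A_g+A_{g-1}=-\mathcal R_0(g;\rho)$;
\item $w=qz$, coming from $\mathcal Z_{\A}(w/(q^2z))$ in \eqref{eq:BD-factorization}, whose residue is $B_N$.
\end{enumerate}
The denominator $(1-q^4z^3)^2$ is simply $(1-q^2w^2z)^2$ evaluated at $w=qz$. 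It does not arise from extracting zeta factors from the local factors. One must also keep the double poles at $q^2w^2z=1$ (modulus $q^{-1/4}$) outside the new $w$-contour, and bound the shifted double integral using $|q^2w^2z|=q^{-2\delta}$ with $\delta<\e/2$.

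Your plan shifts first in the variable attached to $l$ and mentions only the principal pole. In that order the relevant pole $z=w/q$ lies outside the region of absolute convergence \eqref{eq:D-convergence-region}, because its condition $|w|<q^3|z|^2$ fails at $|z|=|w|/q$ when $|w|<q^{-1}$.

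By contrast, the floor-function bookkeeping you flag as the main obstacle is harmless. Since $q^4z_k^3=1$, one has $H_g(z_k)=2$ and $H_g'(z_k)=-3(g-1)/z_k$, and the phase $\omega^{k(g+1)}$ comes directly from $z_k^{g+1}$.
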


\subsection{Nonsquare-dual contribution}

We now estimate the nonsquare-dual contributions $\mathcal S_N(V\ne\square)$, for $N\in\{g,g-1\}$, 
defined in \S\ref{subsec:moment-decomposition}. 
This is the nonsquare-dual calculation of \cite{Fl17}*{Section~7}.  
Florea observes in \cite{Fl17}*{Section~9} that, 
when the primality assumption on $q$ is removed while $q\equiv1\bmod 4$ is retained, 
the required modifications occur in the Poisson formula and in the square-dual calculation, 
but not in the nonsquare estimate.

For the nonsquare estimate over an arbitrary odd prime power $q$,
the only additional issue is the quadratic Gauss phase.  
In the present normalization, this issue has already been resolved in \S\ref{sec:standard-reductions}.  
Indeed, Lemma~\ref{lem:phase-free} shows that the phase-normalized Gauss sum $\Gc(V,\psi_f)$ 
satisfies the same prime-power evaluations and multiplicativity relations 
as Florea's quadratic Gauss sum used in \cite{Fl17}*{Section~7}. 
Proposition~\ref{prop:poisson} also gives the Poisson formula in the same phase-free form.

For completeness, the precise transfer to the fixed-nonsquare Euler products
is recorded next.

\begin{lemma}[Nonsquare local transfer]\label{lem:nonsquare-transfer}
Let $V$ be nonsquare and let $V_0\ne1$ be its square-free part, so that $V/V_0$ is a
square.  Put
\begin{equation*}
 \Bns(V;w,u):=
 \sum_{f\in\A^+}w^{d(f)}
 \frac{\Gc(V,\psi_f)}{\sqrt{|f|}}
 \prod_{P\mid f}(1-u^{d(P)})^{-1}.
\end{equation*}
Then:
\begin{enumerate}
\item by \eqref{eq:Gc-multiplicativity-odd} the summand of $\Bns(V;w,u)$ is
multiplicative in $f$, so that $\Bns(V;w,u)$ has an Euler product whose local factors are
obtained from the table \eqref{eq:local-gauss-table};
\item for $P\nmid V$ the local factor is
$1+\left(\frac{V_0}{P}\right)w^{d(P)}(1-u^{d(P)})^{-1}$,
and the factor remaining after the first extraction, that of
$\mathcal L(w,\chi_{V_0})$, is
\begin{equation}\label{eq:nonsquare-unramified-factor}
 1+\frac{\left(\frac{V_0}{P}\right)(uw)^{d(P)}}{1-u^{d(P)}}
   -\frac{w^{2d(P)}}{1-u^{d(P)}},
\end{equation}
whereas for $P\mid V$ it is
\begin{equation}\label{eq:nonsquare-ramified-factor}
 1+\frac1{1-u^{d(P)}}
 \sum_{j\ge1}\frac{\Gc(V,\psi_{P^j})w^{jd(P)}}{|P|^{j/2}};
\end{equation}
\item if $k$ is minimal with $|wu^k|<q^{-1}$, then repeated extraction from
\eqref{eq:nonsquare-unramified-factor}, which produces the successive factors
$\mathcal L(wu,\chi_{V_0}),\mathcal L(wu^2,\chi_{V_0}),\dots$, gives in total
\begin{equation}\label{eq:nonsquare-L-extraction}
 \Bns(V;w,u)=
 \left(\prod_{j=0}^{k-1}\mathcal L(wu^j,\chi_{V_0})\right)\Cns(V;w,u),
\end{equation}
up to the finitely many ramified Euler factors at $P\mid V$; with $k$ as above, the
product defining $\Cns$ is absolutely convergent on the contours
$|w|=q^{-1/2-\e}$ and $|u|=q^{-\e}$ used in \cite{Fl17}*{Section~7}.
\end{enumerate}
\end{lemma}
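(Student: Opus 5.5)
The three assertions will be proved in order, using Lemma~\ref{lem:phase-free} for all local inputs.

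For (1), the summand $w^{d(f)}\Gc(V,\psi_f)|f|^{-1/2}\prod_{P\mid f}(1-u^{d(P)})^{-1}$ is a product of four functions of $f$. The factors $w^{d(f)}$ and $|f|^{-1/2}$ are completely multiplicative. The factor $\prod_{P\mid f}(1-u^{d(P)})^{-1}$ is multiplicative. For coprime $f_1,f_2$, \eqref{eq:Gc-multiplicativity-odd} shows that $\Gc(V,\psi_{f_1f_2})=\Gc(V,\psi_{f_1})\Gc(V,\psi_{f_2})$, with no phase. The series therefore factors formally as $\prod_P\bigl(1+(1-u^{d(P)})^{-1}\sum_{j\ge1}\Gc(V,\psi_{P^j})w^{jd(P)}|P|^{-j/2}\bigr)$. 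For $P\mid V$ this is exactly \eqref{eq:nonsquare-ramified-factor}, and only finitely many terms are nonzero because of the last line of \eqref{eq:local-gauss-table}. Absolute convergence of the product for $|w|$ small (say $|w|<q^{-1}$, $|u|<1$) follows from the bound $|\Gc(V,\psi_{P^j})|\le|P|^{j}$ in the table. This justifies the formal manipulation.

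For (2), take $P\nmid V$, so $k=0$. The table gives $\Gc(V,\psi_P)=\bigl(\frac VP\bigr)\sqrt{|P|}$ and $\Gc(V,\psi_{P^j})=0$ for $j\ge2$. Since $V=V_0\cdot\square$ with $P$ coprime to the square, $\bigl(\frac VP\bigr)=\bigl(\frac{V_0}{P}\bigr)$. The local factor is therefore $1+\bigl(\frac{V_0}P\bigr)w^{d(P)}(1-u^{d(P)})^{-1}$. Next divide by the local factor $(1-\bigl(\frac{V_0}P\bigr)w^{d(P)})^{-1}$ of $\mathcal L(w,\chi_{V_0})$, i.e.\ multiply by $1-\chi w^{d}$, where $\chi=\bigl(\frac{V_0}P\bigr)$ and $d=d(P)$. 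Writing $(1-u^{d})^{-1}=1+u^{d}(1-u^{d})^{-1}$ and using $\chi^2=1$ for $P\nmid V_0$, the expansion is
\[
\Bigl(1+\frac{\chi w^d}{1-u^d}\Bigr)(1-\chi w^d)=1+\frac{\chi w^d u^d}{1-u^d}-\frac{w^{2d}}{1-u^d}.
\]
This is \eqref{eq:nonsquare-unramified-factor}.

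For (3), iterate. Multiplying \eqref{eq:nonsquare-unramified-factor} by $1-\chi(uw)^d$, the local inverse of $\mathcal L(wu,\chi_{V_0})$, gives $1+\chi(u^2w)^d/(1-u^d)$ plus terms of order $w^{2d}$ (times bounded powers of $u$). Inductively, after dividing by $\mathcal L(wu^j,\chi_{V_0})$ for $j=0,\dots,k-1$, the local factor is $1+\chi(wu^k)^d/(1-u^d)+O\bigl(|w|^{2d}(1+|u|^d)^{O(k)}\bigr)$. Here $k$ is fixed, since $|u|=q^{-\e}$ and $|w|=q^{-1/2-\e}$ give $k\asymp 1/\e$. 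The ramified primes $P\mid V$ are simply left out, as the statement allows. Convergence of $\Cns$ on these contours then needs two facts:
\begin{itemize}
\item The linear term satisfies $\sum_P|wu^k|^{d(P)}\ll\sum_n q^n|wu^k|^n<\infty$, because $|wu^k|<q^{-1}$.
\item The quadratic term satisfies $\sum_P|w|^{2d(P)}\ll\sum_n q^{n}q^{-n(1+2\e)}<\infty$, since $|w|^2=q^{-1-2\e}$. The factor $(1-u^d)^{-1}$ stays bounded because $|u|<1$, and the $(1+|u|^d)^{O(k)}$ factors are harmless.
\end{itemize}

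The main obstacle is the bookkeeping in (3): each extraction produces cross terms, and one must show they remain $O(|w|^{2d(P)})$ uniformly, with constants depending only on $k$ and $\e$. I would handle this by writing the local factor as $1+\sum_{a\ge1,b\ge0}c_{a,b}\,w^{ad}u^{bd}$ and checking that, after the $k$ extractions, the only surviving terms with $a=1$ have $b\ge k$, while all terms with $a\ge2$ carry coefficients bounded by $2^{O(k)}$.
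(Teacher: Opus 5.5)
Your proposal is correct and follows essentially the same route as the paper. Both arguments use multiplicativity of $\Gc$ and the prime-power table to get the local factors. Both then remove the factor of $\mathcal L(w,\chi_{V_0})$ by the same one-line computation, using $\chi^2=1$. Finally, both iterate the extraction until what remains is $\chi(wu^k)^{d(P)}/(1-u^{d(P)})$ plus $O(|w|^{2d(P)})$, which is summable because $|w|^2<q^{-1}$.

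One small slip is in part (1). The crude bound $|\Gc(V,\psi_{P^j})|\le|P|^j$ only gives absolute convergence for $|w|<q^{-3/2}$, not for the region $|w|<q^{-1}$ you name. That larger region follows from the exact table instead: for $P\nmid V$ only $j=1$ survives, with $|\Gc(V,\psi_P)|=\sqrt{|P|}$, and there are only finitely many $P\mid V$.
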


\begin{proof}
Parts (1) and (2) follow prime by prime from
\eqref{eq:local-gauss-table}; crucially, the surviving odd exponent has no
constant-field phase.  
Multiplying \eqref{eq:nonsquare-unramified-factor} over $P\nmid V$ extracts
$\mathcal L(wu,\chi_{V_0})$ and leaves a local factor of the same shape with $u^2w$ in
place of $uw$, together with terms that are $O(|w|^{2d(P)})$ and hence summable over $P$,
since $|w|^2<q^{-1}$.  Iterating until $|wu^k|<q^{-1}$ yields
\eqref{eq:nonsquare-L-extraction}.  The factors at
$P\mid V$ are finite sums by the last line of
\eqref{eq:local-gauss-table}; more precisely, the factors
\eqref{eq:nonsquare-ramified-factor} terminate after finitely many terms, and the
standard divisor bound absorbs their product into $|V|^\delta$, for any fixed $\delta>0$.    
Thus these are term-for-term the local products used in the proof of \cite{Fl17}*{Lemma~7.1}.
\end{proof}

The remaining analytic input for this transfer is the following
off-critical-circle Lindel\"of bound.

\begin{lemma}[Off-critical-circle Lindel\"of bound]
Let $\eta,\theta>0$.  
Uniformly for square-free $W\in\A^+$ with $W\ne1$ and $|\xi|\le q^{-1/2-\eta}$, one has
\begin{equation}\label{eq:lindelof-fixed-q}
\mathcal L(\xi,\chi_W)\ll_{q,\eta,\theta}|W|^\theta.
\end{equation}
\end{lemma}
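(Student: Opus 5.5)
The plan is to use the fact that $\mathcal L(\xi,\chi_W)$ is a polynomial in $\xi$ of degree at most $d(W)-1$. Its coefficients are the character sums $\sum_{f\in\A^+_n}\chi_W(f)$, and these vanish once $n\ge d(W)$ because $\chi_W$ is nontrivial; it is a nonprincipal character modulo $W$ when $W\ne1$ is square-free, using reciprocity to pass between $\bigl(\frac Wf\bigr)$ and $\bigl(\frac fW\bigr)$ up to a harmless sign depending on degrees. The proof then splits into two steps: bound the polynomial on a circle inside the critical disc, and transfer that bound to the whole region $|\xi|\le q^{-1/2-\eta}$.

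\emph{Bound on a circle.} Write $\mathcal L(\xi,\chi_W)=\sum_{n<d(W)}a_n\xi^n$. For the large-$n$ coefficients, the functional equation (the Riemann hypothesis for the curve, or directly the dual length via the functional equation as in \eqref{eq:afe}) gives $|a_n|\le\binom{d(W)-1}{n}q^{n/2}$. Indeed, $\mathcal L(\xi,\chi_W)=\prod_j(1-\gamma_j\xi)$ (times possibly a trivial factor $(1\pm\xi)$ in the even-degree case) with $|\gamma_j|=\sqrt q$ by Weil. Hence, for $|\xi|=q^{-1/2-\eta}$,
$$
\log|\mathcal L(\xi,\chi_W)|\le\sum_j\log(1+q^{-\eta})\cdot O(1),
$$
which is linear in $d(W)$, so this crude bound is not enough. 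The better route is the standard one: the Euler product argument. Take $\log\mathcal L(\xi,\chi_W)=\sum_{n\ge1}\frac{\xi^n}{n}\sum_{d(f)=n}\Lambda(f)\chi_W(f)$. Split at $n\le N:=\lceil\theta' d(W)\rceil$. The short range is bounded trivially by $\sum_{n\le N}q^{n}|\xi|^n/n$, which is $\ll q^{N(1/2-\eta)}$. That is too big, so instead use the explicit formula from the roots, $\sum_{d(f)=n}\Lambda(f)\chi_W(f)=-\sum_j\gamma_j^n(+O(1))$, which is $\ll d(W)q^{n/2}$. Then the tail $n>N$ contributes $\ll d(W)\sum_{n>N}q^{-\eta n}/n\ll d(W)q^{-\eta N}/N$, which is $O(1)$ once $N\gg\log d(W)$. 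Meanwhile the head, bounded by $\sum_{n\le N}q^{n}|\xi|^n\cdot\frac1n\cdot O(1)$ from the prime polynomial theorem, is in fact bounded by $\sum_{n\le N}q^{n(1/2-\eta)}/n$, which is $\le q^{N/2}$. Choosing $N=C_\eta\log_q d(W)$ makes this $d(W)^{C}$, i.e.\ $\log|\mathcal L|\ll (\log|W|)^{1/2+o(1)}$, hence $\ll_{\theta}\theta d(W)\log q+O(1)$, giving $|W|^\theta$.

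\emph{Transfer to the disc.} I would optimize this in the usual Lindelöf-on-GRH manner (as in the function-field bound $\log|L|\ll d/\log d$). The key step is balancing $N$ with $\eta$. Smaller $|\xi|$ only improves the bound by the maximum principle applied to the polynomial, after first working on the circle $|\xi|=q^{-1/2-\eta}$ and then deducing the bound inside the disc. The main obstacle is tracking the trivial factor and the reciprocity sign uniformly, together with ensuring the constants depend only on $q,\eta,\theta$.
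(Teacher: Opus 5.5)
Your argument is in substance the paper's. You expand $\log\mathcal L(\xi,\chi_W)$ through the inverse zeros. You bound the $n$-th coefficient by $q^n$ for small $n$; you use the prime polynomial theorem, where the paper uses the point count of the double cover $X_W\to\mathbb P^1$, i.e.\ $|\Sigma_m(W)|\le q^{m/2}+q^{-m/2}$, which is the same input. For large $n$ you bound it by $O(d(W)q^{n/2})$ via the Riemann hypothesis, and you split at $N\asymp\log_q d(W)$. The maximum-principle step is harmless but unnecessary, since both coefficient bounds already hold for every $|\xi|\le q^{-1/2-\eta}$.

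There is a genuine gap, though: your final choice of $N$ does not work as written.

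\begin{itemize}
\item You require the tail $d(W)q^{-\eta N}/N$ to be $O(1)$. This forces $N\ge\eta^{-1}\log_q d(W)$.
\item With that $N$, your head bound $\sum_{n\le N}q^{(1/2-\eta)n}/n$ is of order $d(W)^{(1/2-\eta)/\eta}/\log d(W)$. That is not $o(d(W))$ once $\eta<1/4$.
\item The cruder bound $q^{N/2}$ that you also quote gives $d(W)^{1/(2\eta)}$, which fails for every $\eta\le1/2$.
\end{itemize}

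So ``$d(W)^C$'' with $C<1$ is not available under your constraint. The intermediate claim $\log|\mathcal L|\ll(\log|W|)^{1/2+o(1)}$ does not follow either: for $\eta<1/2$ the best balance, at $N\approx2\log_q d(W)$, yields only $d(W)^{1-2\eta}$.

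The repair is to stop asking for a bounded tail, since all you need is $\log|\mathcal L(\xi,\chi_W)|=o(d(W))$. Take $N=\max\{1,\lfloor\log_q d(W)\rfloor\}$, as the paper does with $M=\max\{1,\lfloor\log_q(h+2)\rfloor\}$. The tail is then $\ll d(W)^{1-\eta}/\log d(W)$ and the head is $\ll N+d(W)^{\max\{1/2-\eta,0\}}$, both $o(d(W))$. Hence $\log|\mathcal L(\xi,\chi_W)|\le\theta\log|W|$ once $d(W)$ is large in terms of $q,\eta,\theta$. The finitely many remaining $W$ are absorbed into the implied constant.
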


\begin{proof}
Let $X_W$ be the smooth projective hyperelliptic curve attached to $y^2=W(T)$, and let $h$ be its genus.  
By the Riemann hypothesis for curves, the reciprocal zeros of the numerator $P_W(t)$ of its zeta function all have modulus $\sqrt q$.
Thus
$$
P_W(t)=\prod_{j=1}^{2h}\left(1-\sqrt q\,e^{i\vartheta_j}t\right), \qquad (\vartheta_j\in\mathbb R).
$$
Up to the single Euler factor at infinity, $\mathcal L(t,\chi_W)$ equals $P_W(t)$; 
this factor and its reciprocal are bounded on $|t|\le q^{-1/2-\eta}$.  
Put
$$
\Sigma_m(W):=\sum_{j=1}^{2h}e^{im\vartheta_j}.
$$
The degree-two map $X_W\to\mathbb P^1$ and the trace formula give that $|\Sigma_m(W)|\le q^{m/2}+q^{-m/2}$ and $|\Sigma_m(W)|\le2h$.
Let $x=\sqrt q\,\xi$, so that $|x|\le q^{-\eta}<1$.  
Expanding the logarithm and splitting the sum at an integer $M\ge1$, we obtain
$$
\log|P_W(\xi)|\le \sum_{m\ge1}\frac{|\Sigma_m(W)|}{m}|x|^m\ll_{q,\eta} M+q^{\max\{1/2-\eta,0\}M} +\frac{hq^{-\eta M}}{M}.
$$
Taking $M=\max\{1,\lfloor\log_q(h+2)\rfloor\}$ makes the right-hand side $o(h)$.  
Since $d(W)=2h+O(1)$, the resulting bound for $\log|P_W(\xi)|$ is at most $\theta\log|W|$
once $d(W)$ is large enough in terms of $q$, $\eta$ and $\theta$; the finitely many
remaining $W$ are absorbed into the implied constant. 
This proves \eqref{eq:lindelof-fixed-q}.
\end{proof}

Estimate \eqref{eq:lindelof-fixed-q} is the off-critical-circle form of the
Littlewood--Lindel\"of estimate used in this argument; compare
\cite{AT14}*{Theorem~3.4}.

\begin{proposition}[Nonsquare estimate]\label{prop:nonsquare}
For every $\e>0$ and $N\in\{g,g-1\}$, one has 
\begin{equation*}
\mathcal S_N(V\ne\square)\ll_{q,\e}q^{N/2+\e g}.
\end{equation*}
Consequently,
\begin{equation*}
\mathcal S_g(V\ne\square)+\mathcal S_{g-1}(V\ne\square)\ll_{q,\e}q^{g(1+\e)/2}.
\end{equation*}
\end{proposition}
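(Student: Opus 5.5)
We follow the nonsquare calculation of \cite{Fl17}*{Section~7}, transferred through Lemma~\ref{lem:nonsquare-transfer} and the bound \eqref{eq:lindelof-fixed-q}. By definition $\mathcal S_N(V\ne\square)=\mathcal S_{N,\rm o}+\widetilde{\mathcal S}_N(V\ne\square)$, and both pieces have the same shape: a weighted sum over $f\in\A^+_{\le N}$, over $C\mid f^\infty$ with $d(C)\le g-1$, and over nonsquare monic $V$ whose degree lies in a window determined by $d(f)-2g+2d(C)$. We treat each of the boundedly many Poisson ranges in \eqref{eq:SN-even-sector} and in the display for $\mathcal S_{N,\rm o}$ separately.

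\emph{Step 1: generating series.} For a fixed range, write the condition on $d(V)$ as an exact equality or a cumulative inequality. Then detect the constraints $d(f)\le N$ and $d(f)=d(V)+2g-2d(C)+c$, with $c$ a bounded shift, by Cauchy integrals in two variables $w$ (marking $d(f)$) and $u$ (marking $d(C)$). Summing over $C\mid f^\infty$ gives $\sum_{C\mid f^\infty}(u/q^2)^{d(C)}=\prod_{P\mid f}(1-(u/q^2)^{d(P)})^{-1}$. After rescaling $u$ this is exactly the product in $\Bns(V;w,u)$. For each nonsquare $V$ the $f$-sum therefore becomes a contour integral of $\Bns(V;w,u)$ against elementary kernels in $w,u$. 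The powers of $q$ combine as $q^{2g}|f|^{-3/2}|C|^{-2}$, with $|f|^{-1}\approx q^{-d(V)-2g+2d(C)}$, so the net size is $q^{-d(V)}$ times the $\sqrt{|f|}$-normalized series. The truncation $d(C)\le g-1$ and the cutoff at $d(f)\le N$ are handled by the corresponding geometric kernels, as in \cite{Fl17}.

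\emph{Step 2: factorization and bounds.} Apply \eqref{eq:nonsquare-L-extraction} to write $\Bns=\prod_{j<k}\mathcal L(wu^j,\chi_{V_0})\,\Cns$, with $\Cns$ absolutely convergent and uniformly bounded on $|w|=q^{-1/2-\e}$ and $|u|=q^{-\e}$. The ramified factors contribute at most $|V|^{\delta}$. On these contours $|wu^j|\le q^{-1/2-\e}$, so \eqref{eq:lindelof-fixed-q} gives $\mathcal L(wu^j,\chi_{V_0})\ll|V|^\theta$. Here $k=O(1/\e)$ depends only on $\e$, so the total is $\ll|V|^{\e}$ after renaming.

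Now shift the $w$-contour to $|w|=q^{-1/2-\e}$ and bound trivially. Extracting the coefficient of $w^{d(f)}$ at this radius costs $q^{d(f)(1/2+\e)}$. Summing over the $q^{d(V)}$ monic $V$ of each degree, the factor $q^{-d(V)}$ cancels the count. Since $d(f)\le N$, each range is then $\ll q^{N/2+\e g}$. The $u$-integral costs only $q^{\e g}$, since $d(C)\le g$.

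\emph{Main obstacle.} The delicate point is ensuring that the contour shift in $w$ crosses no poles. Because $V_0\ne1$, each $\mathcal L(\cdot,\chi_{V_0})$ is a polynomial, so $\Bns$ is analytic in $|w|<q^{-1/2}$ apart from the explicit kernels. The kernels have poles only at $w$ of modulus at least $1$, or at points depending on $u$ that lie outside the region shifted through; we will check this case by case against Florea's kernels.

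The second delicate point is the cumulative ranges $V\in\A^+_{\le\cdot}$. For these we sum over $d(V)$ first, and the geometric series loses at most a factor $g$, which is absorbed into $q^{\e g}$. Adding the $O(1)$ ranges and the two sectors gives the first bound. The consequence follows by summing over $N\in\{g,g-1\}$.
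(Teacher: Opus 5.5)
Your proposal is correct and follows essentially the paper's route. The paper likewise reduces to the argument of \cite{Fl17}*{Lemma~7.1} via Lemma~\ref{lem:nonsquare-transfer} and \eqref{eq:lindelof-fixed-q} on the contours $|w|=q^{-1/2-\e}$, $|u|=q^{-\e}$, and simply cites Florea for the bookkeeping you sketch. The pole-crossing check you defer is in fact unnecessary: once $d(V)$ and $d(C)$ are fixed, $d(f)$ is fixed (or runs over an interval), so each piece is a single Taylor coefficient of $\Bns(V;w,u)$, which by \eqref{eq:nonsquare-L-extraction} is analytic on the closed polydisc $|w|\le q^{-1/2-\e}$, $|u|\le q^{-\e}$, and Cauchy's estimate applies directly without any kernels.
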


\begin{proof}
After replacing Florea's quadratic Gauss sum by $\Gc(V,\psi_f)$, the local Euler factors,
the fixed-nonsquare factorization and the extraction of the associated quadratic
$L$-functions are those of \cite{Fl17}*{Section~7}, by
Lemma~\ref{lem:nonsquare-transfer}, and the extracted $L$-functions are bounded by
\eqref{eq:lindelof-fixed-q}.  Moreover the odd-degree Poisson ranges consist entirely of
nonsquare dual variables, while in the even-degree ranges one simply restricts to
$V\ne\square$.  The argument proving \cite{Fl17}*{Lemma~7.1} therefore applies without
further modification and gives the first bound; the second follows by summing over
$N\in\{g,g-1\}$ and renaming $\e$.
\end{proof}

\subsection{Proof of the main theorem}

\begin{proof}[Proof of Theorem~\ref{thm:main}]
Fix $\e>0$ and $\rho$ with $1<\rho<q$.
By \eqref{eq:moment-decomposition-short},
\begin{align*}
M_1(g)&=\mathcal M_g+\mathcal M_{g-1}+\mathcal S_g(V=\square)+\mathcal S_{g-1}(V=\square) 
+\mathcal S_g(V\ne\square)+\mathcal S_{g-1}(V\ne\square)+O_{q,\e}\!\left(q^{g(1+\e)/2}\right).
\end{align*}
By Proposition~\ref{prop:zero-frequency},
$$
\mathcal M_g+\mathcal M_{g-1}=\mathcal Q_g+\mathcal R_0(g;\rho)+O_{q,\e}(q^{\e g}),
$$
while Proposition~\ref{prop:square-dual-evaluation}, rearranged, gives
$$
\mathcal S_g(V=\square)+\mathcal S_{g-1}(V=\square)
=\Sodd(g)-\mathcal R_0(g;\rho)+O_{q,\e}\!\left(q^{g(1+\e)/2}\right).
$$
Adding these two identities, the displaced contour $\mathcal R_0(g;\rho)$ cancels and
\begin{align*}
\mathcal M_g+\mathcal M_{g-1}+\mathcal S_g(V=\square)+\mathcal S_{g-1}(V=\square)
=\mathcal Q_g+\Sodd(g)+O_{q,\e}\!\left(q^{g(1+\e)/2}\right).
\end{align*}
Finally, Proposition~\ref{prop:nonsquare} yields
$$
\mathcal S_g(V\ne\square)+\mathcal S_{g-1}(V\ne\square)\ll_{q,\e}q^{g(1+\e)/2}.
$$
Substituting \eqref{eq:principal-term} for $\mathcal Q_g$ and \eqref{eq:Sodd-def} for
$\Sodd(g)$, and replacing $\e$ by a smaller positive parameter when necessary, proves
\eqref{eq:main-theorem}.  The reality of the coefficients and the exact minimal period of
the sequence $(\alpha_m,\beta_m)$ are established in
Proposition~\ref{prop:reality-period}.
\end{proof}

\section{The periodic secondary term}\label{sec:periodic-term}

We now return to the nonzero square-dual contribution
$\mathcal S_g(V=\square)+\mathcal S_{g-1}(V=\square)$ and prove
Proposition~\ref{prop:square-dual-evaluation}, together with the reality of the
coefficients and the exact minimal period of the sequence $(\alpha_m,\beta_m)$ asserted
in Theorem~\ref{thm:main} (Proposition~\ref{prop:reality-period}).
Subsection~\ref{subsec:secondary-reduction} follows \cite{Fl17}*{Section~6}, in the
normalization of \S\ref{subsec:phase-poisson}, and reduces the problem to two
one-variable contour integrals: a principal one, which cancels $\mathcal R_0(g;\rho)$,
and a secondary one.  The evaluation of the secondary integral, in
\S\ref{subsec:three-secondary-residues}, is where the present computation
departs from \cite{Fl17}: the secondary circle carries three double poles of equal
modulus, and all three contribute.

\subsection{Reduction to the secondary residues}\label{subsec:secondary-reduction}

Only even-degree $f$ occur in the square-dual contribution
(\S\ref{subsec:square-dual}); for such $f$ the normalizing factor in
\eqref{eq:Gc-definition-odd} is trivial, so that the local evaluations
\eqref{eq:local-gauss-table} are the ones used in \cite{Fl17}*{Section~6}.

We record the Euler-product reduction explicitly, both to fix our normalization and
because the same Euler products serve $N=g$ and $N=g-1$.  For $q^{-2}<|z|<q^{-1}$ and
$|w|<q^{-1}$, put
\begin{equation}\label{eq:B-bivariate}
 \mathcal B(z,w):=
 \sum_{f\in\A^+}w^{d(f)}
 \prod_{P\mid f}\left(1-\frac1{|P|^2z^{d(P)}}\right)^{-1}
 \sum_{Y\in\A^+}z^{d(Y)}\frac{\Gc(Y^2,\psi_f)}{\sqrt{|f|}}.
\end{equation}
It is essential that $\Gc$, and not the unnormalized sum of
\eqref{eq:Gc-definition-odd}, occur in \eqref{eq:B-bivariate}.  
Indeed, the outer sum runs over all $f\in\A^+$, whereas
by the proof of Lemma~\ref{lem:phase-free} the unnormalized sums satisfy
$$
\sum_{u\bmod f_1f_2}\psi_{f_1f_2}(u)\,e\!\left(\frac{uV}{f_1f_2}\right)
=\eta_q^{\,d(f_1)d(f_2)}\prod_{j=1}^2
\sum_{u\bmod f_j}\psi_{f_j}(u)\,e\!\left(\frac{uV}{f_j}\right),
$$
and hence fail to be multiplicative in $f$ whenever $\eta_q=-1$; the Euler-product
factorization of Lemma~\ref{lem:square-euler-products} below would then be false.  By
\eqref{eq:Gc-multiplicativity-odd} the normalized sum is multiplicative for every odd
prime power $q$.  In the application only even-degree $f$ occur, and there the
normalizing factor is trivial, so \eqref{eq:B-bivariate} coincides with the generating
function of \cite{Fl17}*{Section~6} on the relevant range.
If $d=d(P)$, define
\begin{align}
B_P(z,w):=1+\frac{1}{|P|^2z^d-1}\bigl\{
w^d-|P|^2(w^2z)^d-|P|^2(wz^2)^d+|P|^2(w^3z^2)^d +|P|(w^2z)^d-|P|(w^3z)^d\bigr\},
\label{eq:BP-explicit}
\end{align}
and
\begin{align}
D_P(z,w):={}&1+\frac{1}{(|P|^2z^d-1)(1+w^d)}
\biggl\{-w^{2d}-\frac{w^{3d}}{|P|}+\frac{w^d}{|P|^2z^d}
 +|P|(w^2z)^d \notag\\
&\hspace{35mm} +(w^2z)^d-|P|^2(wz^2)^d+(w^3z)^d
 -|P|^2(w^2z^2)^d\biggr\}.
\label{eq:DP-explicit}
\end{align}

\begin{lemma}[Square-dual Euler products]\label{lem:square-euler-products}
For $q^{-2}<|z|<q^{-1}$ and $|w|<q^{-1}$, all the series and products below converge
absolutely, and
\begin{align}
\mathcal B(z,w)
 &=\mathcal Z_{\A}(z)\mathcal Z_{\A}(w)\mathcal Z_{\A}(qw^2z)
   \prod_{P\in\Pp}B_P(z,w),\label{eq:B-factorization}\\
\prod_{P\in\Pp}B_P(z,w)
 &=\mathcal Z_{\A}\!\left(\frac{w}{q^2z}\right)
   \mathcal Z_{\A}(w^2)^{-1}\prod_{P\in\Pp}D_P(z,w).
\label{eq:BD-factorization}
\end{align}
Both residual products continue to converge absolutely beyond this region: the first one
whenever $|w|<q|z|$, $|w|<q^{-1/2}$ and $|wz|<q^{-1}$, and the second one whenever
\begin{equation}\label{eq:D-convergence-region}
 |w|^2<q|z|,\qquad |w|<q^{3}|z|^{2},\qquad |w|<1,
 \qquad |wz|<q^{-1}.
\end{equation}
\end{lemma}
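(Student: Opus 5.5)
The plan is to compute the Euler factor of $\mathcal B(z,w)$ at each prime directly, and then to peel off zeta factors by comparing leading terms.

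\textbf{Multiplicativity.} By \eqref{eq:Gc-multiplicativity-odd}, the inner sum $\sum_Y z^{d(Y)}\Gc(Y^2,\psi_f)$ factors over the primes dividing $f$. To see this, group $Y$ by the $P$-adic valuations $v_P(Y)$. For $P\mid f$, the Gauss sum $\Gc(Y^2,\psi_{P^i})$ depends only on $k=2v_P(Y)$ and on the symbol $\bigl(\frac{Y_1^2}{P}\bigr)=1$. So the sum over $Y$ equals $\mathcal Z_{\A}(z)$ times, for each $P\mid f$, the local series $(1-z^d)\sum_{j\ge0}z^{jd}\Gc(P^{2j},\psi_{P^i})$. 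Hence $\mathcal B=\mathcal Z_{\A}(z)\prod_P E_P$, with
$$
E_P=1+\bigl(1-|P|^{-2}z^{-d}\bigr)^{-1}(1-z^d)\sum_{i\ge1}\frac{w^{id}}{|P|^{i/2}}\sum_{j\ge0}z^{jd}\,\Gc(P^{2j},\psi_{P^i}).
$$

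\textbf{Evaluating $E_P$.} I will use the table \eqref{eq:local-gauss-table} with $k=2j$. The cases are:
\begin{itemize}
\item $i$ even with $i\le 2j$ gives $\phi(P^i)$;
\item $i=2j+1$ (odd) gives $|P|^{2j}\sqrt{|P|}$;
\item $i$ odd with $i\le 2j$ gives $0$;
\item $i\ge 2j+2$ gives $0$.
\end{itemize}
Summing these geometric series in $j$ and $i$ gives a rational function of $w^d,z^d,|P|$. I expect that after clearing denominators it equals $(1-z^d)^{-1}$-free data times $(1-w^d)^{-1}(1-q\,\cdot)$ factors.

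\textbf{Extracting zeta factors.} Compare $E_P$ with $B_P$ by expanding to first order. The linear terms in $E_P$ are $w^d$, which comes from $i=2,j=1$ and from $i=1,j=0$ with weight $1$, and $|P|(w^2z)^d$, which comes from $i=2j+1$ with $j=1$ roughly. These should match the leading terms of $\mathcal Z_{\A}(w)\mathcal Z_{\A}(qw^2z)$ locally, namely $(1-w^d)^{-1}(1-|P|(w^2z)^d)^{-1}$. I then verify the algebraic identity $E_P=(1-w^d)^{-1}(1-|P|w^{2d}z^d)^{-1}B_P$, a routine rational-function check.

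The second factorization \eqref{eq:BD-factorization} is handled the same way. The term $w^d/(|P|^2z^d)$ in $B_P$, that is, $\frac{w^d}{|P|^2z^d-1}$ expanded at large $|P|^2|z|^d$, matches $\mathcal Z_{\A}(w/(q^2z))$. The term $-w^{2d}$ matches $\mathcal Z_{\A}(w^2)^{-1}$. So $D_P=B_P(1-w^d/(|P|^2z^d))(1-w^{2d})^{-1}$, which again is a polynomial identity, rearranged into the form \eqref{eq:DP-explicit}. The factor $(1+w^d)$ in the denominator there comes from writing $(1-w^{2d})=(1-w^d)(1+w^d)$ and cancelling.

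\textbf{Convergence.} A product $\prod_P(1+a_P)$ converges absolutely when $\sum_P|a_P|<\infty$, and each monomial $c\,|P|^a x^d$ is summable iff $q^{1+a}|x|<1$. For $B_P$, after removing the extracted factors, the surviving monomials of $B_P-1$ are divided by $|P|^2z^d$ (valid when $q^2|z|>1$). This yields:
\begin{itemize}
\item $w/(q^2z)$ times $q$, requiring $|w|<q|z|$;
\item $w^2$ (from cross terms), requiring $|w|<q^{-1/2}$;
\item $wz$, requiring $|wz|<q^{-1}$.
\end{itemize}
For $D_P$, the terms are:
\begin{itemize}
\item $(w^2z)^d|P|/(|P|^2z^d)=w^{2d}/|P|$, together with the second-order pieces $w^{2d}/(|P|^2z^d)$, giving $|w|^2<q|z|$;
\item $w^d/(|P|^4z^{2d})$, giving $|w|<q^3|z|^2$;
\item $|P|^2(w^2z^2)^d/(|P|^2z^d)=(w^2z)^d$, which needs care;
\item $(1+w^d)^{-1}$, giving $|w|<1$.
\end{itemize}

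\textbf{Main obstacle.} I expect the main difficulty to be the bookkeeping in the closed-form evaluation of $E_P$, and in confirming that every remaining monomial falls under \eqref{eq:D-convergence-region}, rather than anything conceptual. At that step I will first expand everything to second order in the small quantities and match monomials term by term.
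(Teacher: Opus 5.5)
Your proposal is correct and is essentially the paper's argument. Like the paper (which cites Florea's Lemmas~6.2--6.3), you use multiplicativity of $\Gc$ from \eqref{eq:Gc-multiplicativity-odd}, apply the local table with $k=2v_P(Y)$, and sum the geometric series; your local identities $E_P=(1-w^d)^{-1}(1-|P|w^{2d}z^d)^{-1}B_P$ and $D_P=B_P\bigl(1-w^d/(|P|^2z^d)\bigr)(1-w^{2d})^{-1}$ reproduce \eqref{eq:BP-explicit} and \eqref{eq:DP-explicit} exactly, and your monomial-by-monomial check gives the stated convergence regions (for $q^2|z|>1$). Some of your heuristic attributions are off but harmless: $w^d$ comes only from $i=1,j=0$, $|P|(w^2z)^d$ comes from $i=2$ summed over $j\ge1$ (not from $i=2j+1$), and in $D_P$ the condition $|w|^2<q|z|$ comes from $-w^{2d}/(|P|^2z^d)$, whereas $w^{2d}/|P|$ needs only $|w|<1$.
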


\begin{proof}
The local computation is that of \cite{Fl17}*{Lemmas~6.2--6.3}: the $P$-part of
\eqref{eq:B-bivariate} is obtained by writing $a=v_P(Y)$ and $b=v_P(f)$ and inserting
\eqref{eq:local-gauss-table}, and summing the resulting geometric series gives
\eqref{eq:BP-explicit} and \eqref{eq:DP-explicit} together with the stated regions of
absolute convergence.  The one ingredient not contained in \cite{Fl17} is that these
$P$-parts multiply, which is \eqref{eq:Gc-multiplicativity-odd}; as noted after
\eqref{eq:B-bivariate}, the unnormalized sums are not multiplicative in general when
$\eta_q=-1$.
\end{proof}

The contour reduction now proceeds exactly as in \cite{Fl17}*{(6.2)--(6.5)}: the range
$d(C)\le g-1$ of \eqref{eq:SN-even-sector} is completed to all $C\mid f^\infty$,
Perron's coefficient formula is applied to the $Y$-range and to the even $f$-range, and
\eqref{eq:B-factorization}--\eqref{eq:BD-factorization} are substituted.  This gives,
uniformly for $N\in\{g,g-1\}$,
\begin{align}\label{eq:square-double-integral}
\mathcal S_N(V=\square)
=-q^{2g+1}\left(\frac1{2\pi i}\right)^2
\oint_{|w|=r}\oint_{|z|=q^{-3/2}}
&\frac{z^g(1-1/(qz))(1-qw^2)\prod_PD_P(z,w)}
{w(1-z)(1-w/(qz))(1-qw)(1-q^2w^2z)^2(q^2w^2z)^{\lfloor N/2\rfloor}}\,dz\,dw \notag\\
&+O_{q,\e}\!\left(q^{g(1+\e)/2}\right),
\end{align}
where $r<q^{-1}$ is fixed sufficiently small.  The error term absorbs the completion of
the $C$-range together with the tails of the Perron truncations, and is the error term
of \cite{Fl17}*{(6.3)--(6.4)}.
For $N=g-1$, \eqref{eq:square-double-integral} is, factor for factor, the display
following \cite{Fl17}*{(6.5)}, obtained there by substituting
\cite{Fl17}*{Lemma~6.3}; the case $N=g$ differs from it only in the exponent
$\lfloor N/2\rfloor$, which is the sole place where $N$ enters.  We record this
explicitly because it localizes the entire difference between the
present computation and that of \cite{Fl17} in the residue bookkeeping at the secondary
circle, carried out in \S\ref{subsec:three-secondary-residues}.

Move the $w$-contour in \eqref{eq:square-double-integral} outwards to
$|w|=q^{-1/4-\delta}$, with $0<\delta<1/4$.  On $|z|=q^{-3/2}$ the poles of the integrand
in $w$ lie at $w=q^{-1}$, at $w=qz$, of modulus $q^{-1/2}$, and at the two points with
$q^2w^2z=1$, of modulus $q^{-1/4}$.  The first two are crossed, and their contributions
to \eqref{eq:square-double-integral} are $A_N$ and $B_N$ below; the double poles of
$(1-q^2w^2z)^{-2}$ are not crossed, since $q^{-1/4-\delta}<q^{-1/4}$.
The integral on the new contour is $O_{q,\e}\!\left(q^{g(1+\e)/2}\right)$.  Indeed, on
that contour $|q^2w^2z|=q^{-2\delta}$, so that the inequality $\delta>0$ which keeps the
double poles outside is the same as the first inequality of
\eqref{eq:D-convergence-region}; the four inequalities there then bound
$\prod_{P}D_P(z,w)$ in terms of $q$, while $|z^g|=q^{-3g/2}$ and
$|(q^2w^2z)^{-\lfloor N/2\rfloor}|\le q^{\delta g}$, and it suffices to take
$\delta<\e/2$.
This is the estimate in \cite{Fl17}*{(6.8)--(6.9)}, and the displayed
integral shows that its constants and exponents are uniform for both
values of $N$.  Consequently
\begin{equation}\label{eq:AB-decomposition}
\mathcal S_N(V=\square)=A_N+B_N+O_{q,\e}\!\left(q^{g(1+\e)/2}\right),
\end{equation}
where
\begin{align}
A_N&=-\frac{q^{2g+1}}{2\pi i}\oint_{|z|=q^{-3/2}}
\frac{z^g\left(1-\frac1{qz}\right)\prod_{P\in\Pp}B_P(z,q^{-1})}{(1-z)^3z^{\lfloor N/2\rfloor}}\,dz,
\label{eq:A-principal}\\
B_N&=-\frac{q^{2g+1}}{2\pi i}\oint_{|z|=q^{-3/2}}
\frac{z^g\left(1-\frac1{qz}\right)(1-q^3z^2)\prod_{P\in\Pp}D_P(z,qz)}
{(1-z)(1-q^2z)(1-q^4z^3)^2(q^4z^3)^{\lfloor N/2\rfloor}}\,dz.
\label{eq:B-secondary}
\end{align}

We first identify the contribution of $A_g+A_{g-1}$.
For $w=q^{-1}$, Lemma~\ref{lem:square-euler-products} gives absolute convergence 
of $\prod_{P\in\Pp}B_P(z,q^{-1})$ throughout $q^{-2}<|z|<1$.
Hence, for any $1<\rho<q$, the contour in \eqref{eq:A-principal} may be deformed 
from $|z|=q^{-3/2}$ to $|z|=\rho^{-1}$ without crossing the pole at $z=1$.
In the scaled $u$-variable used in the present paper, the Euler-product identity immediately preceding \cite{Fl17}*{(6.10)} becomes
\begin{equation}\label{eq:B-C-identity}
\left(1-\frac{u}{q}\right)\prod_{P\in\Pp}B_P\!\left(u^{-1},q^{-1}\right)\left(1-u^{-1}\right)^{-1}
=\frac{\mathcal C(u)}{\zA(2)}.
\end{equation}
Applying \eqref{eq:B-C-identity} after the change of variables $z=u^{-1}$, 
and using $\left\lfloor g/2\right\rfloor+\left\lfloor(g-1)/2\right\rfloor=g-1$, gives
\begin{align}
A_g&=-\frac{q^{2g+1}}{\zA(2)}\frac1{2\pi i}
\oint_{|u|=\rho}\frac{\mathcal C(u)}{(1-u)^2u^{\lfloor (g-1)/2\rfloor+1}}\,du, \label{eq:Ag-R0} \\
A_{g-1}&=-\frac{q^{2g+1}}{\zA(2)}\frac1{2\pi i}
\oint_{|u|=\rho}\frac{\mathcal C(u)}{(1-u)^2u^{\lfloor g/2\rfloor+1}}\,du. \label{eq:Agminus1-R0}
\end{align}

Comparing \eqref{eq:Ag-R0}--\eqref{eq:Agminus1-R0} with \eqref{eq:R0}, we obtain
\begin{equation}\label{eq:A-R0-cancellation}
A_g+A_{g-1}=-\mathcal R_0(g;\rho).
\end{equation}
It follows from \eqref{eq:AB-decomposition} and \eqref{eq:A-R0-cancellation} that
\begin{equation*}
\mathcal R_0(g;\rho)+\mathcal S_g(V=\square)+\mathcal S_{g-1}(V=\square)
=B_g+B_{g-1}+O_{q,\e}\!\left(q^{g(1+\e)/2}\right).
\end{equation*}

Thus it remains to evaluate $B_g+B_{g-1}$.
From \eqref{eq:B-secondary}, we have
\begin{equation}\label{eq:B-sum}
B_g+B_{g-1}=-\frac{q^{2g+1}}{2\pi i}\oint_{|z|=q^{-3/2}} F(z)\,dz,
\end{equation}
where
\begin{align*}
F(z):=\frac{z^g\left(1-\frac1{qz}\right)(1-q^3z^2)\prod_{P\in\Pp}D_P(z,qz)}{(1-z)(1-q^2z)(1-q^4z^3)^2}
\left\{(q^4z^3)^{-\lfloor g/2\rfloor}+(q^4z^3)^{-\lfloor (g-1)/2\rfloor}\right\}.
\end{align*}
The Euler product $\prod_{P\in\Pp}D_P(z,qz)$ is absolutely and locally uniformly convergent 
in the annulus $q^{-2}<|z|<q^{-1}$, by \eqref{eq:D-convergence-region} with $w=qz$.  
We may therefore enlarge the contour in \eqref{eq:B-sum} from $|z|=q^{-3/2}$ to $|z|=q^{-1-\eta}$, where $0<\eta<1/3$.
The singularity at $z=0$ and the pole of $(1-q^2z)^{-1}$ at $z=q^{-2}$ lie inside the initial contour, 
while the pole at $z=1$ lies outside the final contour.  
Hence the only poles crossed in this contour deformation arise from $(1-q^4z^3)^{-2}$ 
and are $z_k=q^{-4/3}\omega^k$ ($k=0,1,2$), where $\omega=e^{2\pi i/3}$.  
We refer to $z_0,z_1,z_2$ as the secondary points; they lie on the secondary circle $|z|=q^{-4/3}$.
Hence we have
\begin{align*}
B_g+B_{g-1}=q^{2g+1}\sum_{k=0}^2\Res_{z=z_k}F(z)-\frac{q^{2g+1}}{2\pi i}\oint_{|z|=q^{-1-\eta}}F(z)\,dz.
\end{align*}
We now estimate the integral over the new contour.  
On $|z|=q^{-1-\eta}$, one has $q^4|z|^3=q^{1-3\eta}>1$.
Moreover, the Euler product $\prod_{P\in\Pp}D_P(z,qz)$ 
and all the remaining rational factors in $F(z)$ are bounded in terms of $q$ and $\eta$.  
Therefore
$$
|F(z)|\ll_{q,\eta}q^{-g(1+\eta)}q^{-(1-3\eta)\lfloor (g-1)/2\rfloor}.
$$
Since the length of the new contour is $2\pi q^{-1-\eta}$, it follows that
\begin{align*}
\frac{q^{2g+1}}{2\pi}
\left|\oint_{|z|=q^{-1-\eta}}F(z)\,dz\right|
\ll_{q,\eta} q^{g(1-\eta)} q^{-(1-3\eta)\lfloor (g-1)/2\rfloor}
\ll_{q,\eta} q^{g(1+\eta)/2}.
\end{align*}
Thus, choosing $0<\eta<\min\{\e,1/3\}$, we conclude that
\begin{equation}\label{eq:B-three-residues}
B_g+B_{g-1}=q^{2g+1}\sum_{k=0}^2\Res_{z=z_k}F(z)+O_{q,\e}\!\left(q^{g(1+\e)/2}\right).
\end{equation}

\subsection{Evaluation of the three secondary residues}
\label{subsec:three-secondary-residues}

We now evaluate the three residues in \eqref{eq:B-three-residues}.  
Put
\begin{equation}\label{eq:Phi-def}
\Phi(z):=\frac{\left(1-\frac1{qz}\right)(1-q^3z^2)\prod_{P\in\Pp}D_P(z,qz)}{(1-z)(1-q^2z)}
\end{equation}
and
$$
H_g(z):=(q^4z^3)^{-\lfloor g/2\rfloor}+(q^4z^3)^{-\lfloor (g-1)/2\rfloor}.
$$
Then
\begin{equation*}
F(z)=\frac{z^g\Phi(z)H_g(z)}{(1-q^4z^3)^2}.
\end{equation*}
By \eqref{eq:D-convergence-region} with $w=qz$, the Euler product
$\prod_{P\in\Pp}D_P(z,qz)$ is holomorphic in the annulus $q^{-2}<|z|<q^{-1}$, which
contains the secondary points $z_0,z_1,z_2$, while the remaining factors of $\Phi$ have
their poles at $z=1$ and $z=q^{-2}$.  Hence $\Phi$ is holomorphic in that annulus, and in
particular in a neighbourhood of each secondary point.
Let $z_*$ denote any one of $z_0,z_1,z_2$.
Since $q^4z_*^3=1$ and $\lfloor{g}/{2}\rfloor + \lfloor{(g-1)}/{2}\rfloor=g-1$, we have
\begin{equation}\label{eq:H-values}
H_g(z_*)=2, \quad H_g'(z_*)=-\frac{3(g-1)}{z_*}.
\end{equation}
Since $1-q^4z^3=-q^4(z-z_*)(z^2+zz_*+z_*^2)$, we obtain
\begin{align}\label{eq:double-residue-derivative}
\Res_{z=z_*}F(z)=\left. \frac{d}{dz}\left\{\frac{z^g\Phi(z)H_g(z)}{q^8(z^2+zz_*+z_*^2)^2}\right\}\right|_{z=z_*}.
\end{align}
At $z=z_*$, we have 
\begin{equation}\label{eq:double-pole-factor-values}
\left.\frac1{q^8(z^2+zz_*+z_*^2)^2}\right|_{z=z_*}=\frac{z_*^2}{9}, \quad
\left.\frac{d}{dz}\frac1{q^8(z^2+zz_*+z_*^2)^2}\right|_{z=z_*}=-\frac{2z_*}{9}.
\end{equation}
Substituting \eqref{eq:H-values} and \eqref{eq:double-pole-factor-values} 
into \eqref{eq:double-residue-derivative}, we obtain
\begin{equation}\label{eq:one-z-residue}
\Res_{z=z_*}F(z)=\frac{z_*^{g+1}}9\left(2z_*\Phi'(z_*)-(g+1)\Phi(z_*)\right).
\end{equation}
Applying \eqref{eq:one-z-residue} to $z_k$ ($k=0,1,2$) gives
\begin{align}\label{eq:three-residues-explicit}
q^{2g+1}\sum_{k=0}^2\Res_{z=z_k}F(z)
=\frac{q^{2g/3-1/3}}9\sum_{k=0}^2\omega^{k(g+1)}\left(2z_k\Phi'(z_k)-(g+1)\Phi(z_k)\right).
\end{align}

The genus enters the right-hand side of \eqref{eq:three-residues-explicit} in two ways:
through the factor $g+1$, and through the cube root of unity $\omega^{k(g+1)}$, which
depends on $g$ only through $g\bmod3$.  The latter is the source of the periodicity.
Accordingly, for $m\in\{0,1,2\}$ define
\begin{align}
\alpha_m&:=-\frac{q^{-1/3}}9\sum_{k=0}^2\omega^{k(m+1)}\Phi(z_k),\label{eq:alpha-periodic} \\
\beta_m&:=\frac{q^{-1/3}}9\sum_{k=0}^2\omega^{k(m+1)}\left(2z_k\Phi'(z_k)-\Phi(z_k)\right). \label{eq:beta-periodic}
\end{align}
Since $\Phi$ and the points $z_0,z_1,z_2$ do not involve $g$, the six numbers
$\alpha_m,\beta_m$ depend only on $q$.
Separating in \eqref{eq:three-residues-explicit} the part proportional to $g$ from the
rest, and using $\omega^{k(g+1)}=\omega^{k((g\bmod3)+1)}$, we obtain
\begin{equation}\label{eq:secondary-residue-sum}
q^{2g+1}\sum_{k=0}^2\Res_{z=z_k}F(z)
=q^{2g/3}\left(\alpha_{g\bmod3}\,g+\beta_{g\bmod3}\right)=\Sodd(g).
\end{equation}
In \cite{Fl17} only the term $k=0$ of \eqref{eq:three-residues-explicit} is retained: the
integrand of \cite{Fl17}*{(6.7)} is there described as having a double pole at
$z=q^{-4/3}$, and only the residue at that point is taken, in the display preceding
\cite{Fl17}*{(6.11)}.  The two nonreal points are compared with \cite{Fl17} in
\S\ref{subsec:Florea-comparison}.
Consequently,
\begin{equation}\label{eq:B-periodic-evaluation}
B_g+B_{g-1}=\Sodd(g)+O_{q,\e}\!\left(q^{g(1+\e)/2}\right).
\end{equation}

\subsection{Reality, exact period, and the square-dual evaluation}

We first record the nonvanishing needed below.

\begin{lemma}\label{lem:secondary-nonvanishing}
The Euler product
$$
\prod_{P\in\Pp}D_P(z,qz)
$$
is nonzero at each of the secondary points $z_0,z_1,z_2$.
Consequently, $\Phi(z_k)\ne0$ for $k=0,1,2$.
\end{lemma}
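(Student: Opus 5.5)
The plan is to reduce the statement to a prime-by-prime check and then handle small and large degrees separately. By Lemma~\ref{lem:square-euler-products}, with $w=qz$, the product $\prod_{P\in\Pp}D_P(z,qz)$ converges absolutely at each secondary point $z_k$. These points satisfy $|z_k|=q^{-4/3}$, so they lie in the annulus $q^{-2}<|z|<q^{-1}$ where \eqref{eq:D-convergence-region} holds. An absolutely convergent infinite product vanishes only if one of its factors vanishes. So it suffices to show that $D_P(z_k,qz_k)\ne0$ for every $P\in\Pp$ and every $k$. The consequence for $\Phi$ then follows from \eqref{eq:Phi-def}: at $z_k$ the rational factors $1-\frac1{qz}$, $1-q^3z^2$, $1-z$ and $1-q^2z$ have moduli of order $q^{1/3}$, $q^{1/3}$, $1$ and $q^{2/3}$, so none of them vanishes or has a pole there.

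Put $d=d(P)$, $w=qz$ and $t=z^d$, so that $w^d=q^dt$. For the \emph{large-degree} primes I would bound $|D_P-1|$ directly from \eqref{eq:DP-explicit}. With $|t|=q^{-4d/3}$, the brace in \eqref{eq:DP-explicit} has terms of sizes $q^{-2d/3}$, $q^{-2d}$, $q^{-d}$, $q^{-d}$, $q^{-2d}$, $q^{-d}$, $q^{-7d/3}$ and $q^{-4d/3}$. The denominator is at least $(q^{2d/3}-1)(1-q^{-d/3})$ in modulus. Hence
\[
|D_P(z_k,qz_k)-1|\le\frac{q^{-2d/3}+3q^{-d}+O(q^{-4d/3})}{(q^{2d/3}-1)(1-q^{-d/3})}\ll q^{-4d/3}.
\]
This is $<1$ once $d\ge d_0$, for some small explicit $d_0$ (for instance $d_0=2$ or $3$ when $q=3$, and already $d_0=1$ for larger $q$). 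For these primes the factor is nonzero.

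The main obstacle is the finitely many small degrees, especially $d=1$ with $q=3$, where the crude bound exceeds $1$. For these I would clear denominators in \eqref{eq:DP-explicit}. This shows that $D_P(z,qz)=0$ exactly when a polynomial identity $N_d(t)=0$ holds, where $N_d$ has integer coefficients in powers of $q^d$. I then substitute $t=q^{-4d/3}\zeta$ with $\zeta^3=1$ and collect terms.

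Writing $y=q^{d/3}$, the identity becomes $a_0+a_1y+a_2y^2=0$ with $a_i\in\mathbb Q(\omega)$ explicit. When $q^d$ is not a perfect cube, $x^3-q^d$ stays irreducible over $\mathbb Q(\omega)$, which has degree $2$. Then $1,y,y^2$ are linearly independent over $\mathbb Q(\omega)$, and it suffices to check that some $a_i\ne0$. I expect the coefficient of the dominant power to do this.

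When $q^d$ is a cube, $y$ is an integer. In that case I would instead use a refined size comparison that keeps the exact leading terms $-w^{2d}$ and $|P|^2z^d$ rather than absolute values. Since $y\ge3$, a dominant-term inequality should close the case.

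If this algebraic route becomes messy, the fallback is a direct numerical verification of $|D_P|>0$ at the three points for the bounded range $d<d_0$. The bound above makes that range uniform in $q$, and it is empty for all sufficiently large $q$.
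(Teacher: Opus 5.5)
Your proposal is correct. The reduction to individual factors and the check on the rational factors of $\Phi$ are as in the paper, but you prove the local nonvanishing by a different route.

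\textbf{The paper's argument.} It makes no degree split. With $Q=|P|$ and $t=z_k^{d}$, it uses $t^3=Q^{-4}$ to reduce the numerator of $D_P(z_k,qz_k)$ to $\frac{Q-1}{Q}(Q^3t^2+(Q^2-1)t-1)$. This quadratic has real coefficients and positive discriminant, so any zero would be real. But $t$ is either nonreal or equal to $Q^{-4/3}$, and at $Q^{-4/3}$ the quadratic equals $Q^{1/3}+Q^{2/3}-Q^{-4/3}-1>0$.

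\textbf{Your algebraic step.} It is the same reduction modulo the cube relation, finished by linear independence over $\mathbb{Q}(\omega)$ instead of by reality. Carried out, it gives, up to the factor $Q-1$, the condition $Q\zeta^2y^2-Qy+(Q^2-1)\zeta=0$ with $y=Q^{1/3}$ and $\zeta=\omega^{kd}$. Clearing denominators is legitimate because $|Q^2t|=Q^{2/3}\neq1$ and $|Qt|<1$.

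The $y$-coefficient $-Q$ is nonzero, so this step alone settles every non-cube $Q$ in every degree. Your large-degree bound is therefore not needed for nonvanishing, although it does give $|D_P-1|\ll|P|^{-4/3}$ and hence absolute convergence directly.

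\textbf{Your case split.} The cube case never arises in your small-degree range. Your crude bound is already below $1$ once $Q\ge7$, which leaves only $(q,d)=(3,1)$ and $(5,1)$, and neither $3$ nor $5$ is a cube. This also means $d_0=1$ holds only from $q\ge7$ on: at $q=5$, $d=1$ the crude ratio is about $1.45$. That is harmless, since the algebraic step covers it.

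\textbf{Comparison.} The paper's reality argument is shorter and uniform in $P$ and $q$. It needs no field-degree argument, no cube/non-cube dichotomy and no numerical fallback. Your route's one extra payoff is the explicit quantitative bound on $|D_P-1|$.
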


\begin{proof}
The Euler product $\prod_{P\in\Pp}D_P(z,qz)$ converges absolutely at the secondary points
by \eqref{eq:D-convergence-region} with $w=qz$.  
It remains to check that $D_P(z_k,qz_k)\ne0$ for every $P\in\Pp$ and every $k$.

Fix $P\in\Pp$, write $d=d(P)$ and put $\zeta:=z_k^{d}=|P|^{-4/3}\omega^{kd}$, so that
$\zeta^3=|P|^{-4}$.  Specializing \eqref{eq:DP-explicit} to $w=qz$ and evaluating at
$z=z_k$ gives
$$
D_P(z_k,qz_k)
=1+\frac{|P|^{-1}-|P|^2\zeta^{2}+|P|^3(1-|P|)\zeta^{4}}
        {\bigl(|P|^2\zeta-1\bigr)\bigl(1+|P|\zeta\bigr)},
$$
and the denominator is nonzero because $|P|^2|\zeta|=|P|^{2/3}\ne1$ and $|P||\zeta|=|P|^{-1/3}<1$.
Adding the two terms, the numerator of $D_P(z_k,qz_k)$ equals
$$
-\frac{|P|-1}{|P|}\left(|P|^4\zeta^{4}-|P|^3\zeta^{2}-|P|^2\zeta+1\right)
=\frac{|P|-1}{|P|}\left(|P|^3\zeta^{2}+\bigl(|P|^2-1\bigr)\zeta-1\right),
$$
the second equality because $|P|^4\zeta^{4}=|P|^4\zeta\cdot\zeta^3=\zeta$.  Since
$|P|>1$, the vanishing of $D_P(z_k,qz_k)$ therefore forces $|P|^3\zeta^{2}+\bigl(|P|^2-1\bigr)\zeta-1=0$.
The two roots of this quadratic are real, its discriminant
$\bigl(|P|^2-1\bigr)^2+4|P|^3$ being positive.
However $\zeta=|P|^{-4/3}\omega^{kd}$ is either nonreal or equal to $|P|^{-4/3}$, and in
the latter case the left-hand side equals $|P|^{1/3}+|P|^{2/3}-|P|^{-4/3}-1>0$, 
since $|P|>1$.  Thus $D_P(z_k,qz_k)\ne0$ for every $P\in\Pp$, and, the Euler product
being absolutely convergent at $z_k$, it follows that
$\prod_{P\in\Pp}D_P(z_k,qz_k)\ne0$.
The remaining factors in \eqref{eq:Phi-def} are also nonzero at $z=z_k$, and hence
$\Phi(z_k)\ne0$.
\end{proof}

\begin{proposition}\label{prop:reality-period}
The coefficients $\alpha_m$ and $\beta_m$ are real for each $m\in\{0,1,2\}$.  Moreover, if
$(\alpha_m,\beta_m)$ is extended periodically from $m\in\{0,1,2\}$ to $m\in\mathbb Z$,
the resulting sequence has minimal period exactly $3$.
\end{proposition}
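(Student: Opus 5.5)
The plan has two steps: reality follows from a conjugation symmetry of $\Phi$, and the exact period follows from a discrete Fourier inversion in $m$ combined with Lemma~\ref{lem:secondary-nonvanishing}.

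\emph{Reality.} First I will check that $\Phi(\bar z)=\overline{\Phi(z)}$ throughout the annulus $q^{-2}<|z|<q^{-1}$. Specializing \eqref{eq:DP-explicit} to $w=qz$, each local factor $D_P(z,qz)$ is a rational function of $z$ with real coefficients, since $|P|$ and $q$ are real. The product $\prod_P D_P(z,qz)$ converges absolutely and locally uniformly there by \eqref{eq:D-convergence-region}, so it inherits the symmetry under $z\mapsto\bar z$. The remaining factors of \eqref{eq:Phi-def} are rational with real coefficients. Differentiating the identity, or equivalently using that a holomorphic function real on the real segment of the annulus has real Taylor coefficients there, gives $\Phi'(\bar z)=\overline{\Phi'(z)}$ as well.

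Since $z_0$ is real and $z_2=\overline{z_1}$, the $k=0$ terms in \eqref{eq:alpha-periodic} and \eqref{eq:beta-periodic} are real. For the other two terms, $\omega^{2(m+1)}=\overline{\omega^{m+1}}$, so the $k=2$ term is the complex conjugate of the $k=1$ term. Their sum is therefore real, and so are $\alpha_m$ and $\beta_m$.

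\emph{Exact period.} The extended sequence has period $3$ by construction, so its minimal period is either $1$ or $3$. It suffices to show that $m\mapsto\alpha_m$ is not constant. Inverting the finite Fourier transform in \eqref{eq:alpha-periodic} with the orthogonality relation $\sum_{m=0}^2\omega^{(m+1)(k-j)}=3\cdot\mathbf 1_{k=j}$ gives
\begin{equation*}
\sum_{m=0}^{2}\omega^{-(m+1)j}\alpha_m=-\frac{q^{-1/3}}{3}\Phi(z_j),\qquad j=0,1,2.
\end{equation*}
If $\alpha_0=\alpha_1=\alpha_2$, the left side for $j=1$ would equal $\alpha_0\sum_{m}\omega^{-(m+1)}=0$, forcing $\Phi(z_1)=0$. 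This contradicts Lemma~\ref{lem:secondary-nonvanishing}. Hence the first Fourier coefficient of $m\mapsto\alpha_m$ is $-\tfrac13 q^{-1/3}\Phi(z_1)\neq0$, and the minimal period is exactly $3$.

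The only substantive input is the nonvanishing $\Phi(z_1)\ne0$, which Lemma~\ref{lem:secondary-nonvanishing} already supplies. What remains to take care over is justifying the conjugation symmetry of the infinite Euler product, which I expect to follow directly from absolute convergence at the secondary points.
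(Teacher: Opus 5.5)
Your proposal is correct and follows the paper's proof essentially verbatim. Reality comes from $\Phi(\bar z)=\overline{\Phi(z)}$ (real-coefficient local factors plus absolute convergence), together with $z_2=\overline{z_1}$ and $\omega^{2(m+1)}=\overline{\omega^{m+1}}$. The exact period $3$ comes from the discrete Fourier coefficient of $m\mapsto\alpha_m$ at the frequency of $z_1$, which is a nonzero multiple of $\Phi(z_1)$ by Lemma~\ref{lem:secondary-nonvanishing}; your normalization $\sum_m\omega^{-(m+1)}\alpha_m$ differs from the paper's $\sum_m\omega^{-m}\alpha_m=-\tfrac13q^{-1/3}\omega\,\Phi(z_1)$ only by a factor of modulus one, which is immaterial.
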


\begin{proof}
We have $z_0=q^{-4/3}\in\mathbb R$ and $z_2=\overline{z_1}$.  The local factors
$D_P(z,qz)$ and the rational factor in \eqref{eq:Phi-def} have real coefficients, so
$\Phi$ is real on the real points of the annulus $q^{-2}<|z|<q^{-1}$ and therefore
satisfies $\Phi(\bar z)=\overline{\Phi(z)}$ there, as does $\Phi'$.  Since
$\omega^{2(m+1)}=\overline{\omega^{m+1}}$, the terms $k=1$ and $k=2$ in
\eqref{eq:alpha-periodic} and \eqref{eq:beta-periodic} are complex conjugates of one
another, while the terms with $k=0$ are real.  Hence $\alpha_m$ and $\beta_m$ are real.

The sequence $(\alpha_m,\beta_m)_{m\in\mathbb Z}$ is $3$-periodic by construction, so its
minimal period is $1$ or $3$, and it remains only to rule out $1$.
Applying the discrete Fourier transform to \eqref{eq:alpha-periodic}, and using that
$\sum_{m=0}^2\omega^{m(k-1)}$ equals $3$ if $k=1$ and $0$ if $k\in\{0,2\}$, we obtain
\begin{align}\label{eq:alpha-fourier}
\sum_{m=0}^2\alpha_m\omega^{-m}
=-\frac{q^{-1/3}}9\sum_{k=0}^2\omega^k\Phi(z_k)\sum_{m=0}^2\omega^{m(k-1)}
=-\frac{q^{-1/3}}3\,\omega\,\Phi(z_1).
\end{align}
By Lemma~\ref{lem:secondary-nonvanishing}, $\Phi(z_1)\ne0$, so the Fourier coefficient in
\eqref{eq:alpha-fourier} does not vanish and $m\mapsto\alpha_m$ is not constant.
Therefore the minimal period is exactly $3$.
\end{proof}

The reduction of \S\ref{subsec:secondary-reduction} and the residue evaluation
of \S\ref{subsec:three-secondary-residues} now combine to prove the square-dual
evaluation stated in \S\ref{subsec:square-dual}.

\begin{proof}[Proof of Proposition~\ref{prop:square-dual-evaluation}]
Let $\e>0$ and let $\rho$ satisfy $1<\rho<q$.  Summing \eqref{eq:AB-decomposition} over
$N\in\{g,g-1\}$ gives
$$
\mathcal S_g(V=\square)+\mathcal S_{g-1}(V=\square)
=(A_g+A_{g-1})+(B_g+B_{g-1})+O_{q,\e}\!\left(q^{g(1+\e)/2}\right).
$$
The first bracket equals $-\mathcal R_0(g;\rho)$ by \eqref{eq:A-R0-cancellation}, and the
second equals $\Sodd(g)+O_{q,\e}\bigl(q^{g(1+\e)/2}\bigr)$ by
\eqref{eq:B-periodic-evaluation}.  Adding $\mathcal R_0(g;\rho)$ to both sides therefore
gives
\begin{align*}
\mathcal R_0(g;\rho)+\mathcal S_g(V=\square)+\mathcal S_{g-1}(V=\square)
=\Sodd(g)+O_{q,\e}\!\left(q^{g(1+\e)/2}\right),
\end{align*}
which is the assertion.  The coefficients occurring in $\Sodd(g)$ are those defined in
\eqref{eq:alpha-periodic}--\eqref{eq:beta-periodic}; they depend only on $q$, and by
Proposition~\ref{prop:reality-period} they are real and the extended sequence
$(\alpha_m,\beta_m)_{m\in\mathbb Z}$ has minimal period exactly $3$, as asserted in
Theorem~\ref{thm:main}.
\end{proof}

\subsection{Comparison with Florea}\label{subsec:Florea-comparison}

This subsection does two things: it identifies the positive-real-pole contribution
with the polynomial obtained by Florea, and it shows that the nonreal poles are
genuinely present.  Only the first requires a restriction on $q$, and that
restriction is imposed inside Remark~\ref{rem:Florea-identification} alone;
everything else here is valid for every odd prime power $q$.

Recall that $z_0=q^{-4/3}$ is the positive real secondary pole, 
while $z_1$ and $z_2=\overline{z_1}$ are the two nonreal conjugate secondary poles.
By \eqref{eq:three-residues-explicit}, the contribution of the positive real pole $z_0$ is
\begin{equation*}
\mathcal S_+(g):=\frac{q^{2g/3-1/3}}9\left(2z_0\Phi'(z_0)-(g+1)\Phi(z_0)\right).
\end{equation*}
Define the linear polynomial
\begin{equation*}
R_+(X):=\frac{q^{-2/3}}{18}\left(4z_0\Phi'(z_0)-(X+1)\Phi(z_0)\right).
\end{equation*}
Then
\begin{equation*}
\mathcal S_+(g)=q^{(2g+1)/3}R_+(2g+1).
\end{equation*}

Since $z_2=\overline{z_1}$ and $\Phi(z_2)=\overline{\Phi(z_1)}$, the contribution of the two nonreal poles is
\begin{equation}\label{eq:S-nonreal}
\mathcal S_{\mathrm{nr}}(g)
:=\frac{2q^{2g/3-1/3}}9\operatorname{Re}\!\left[\omega^{g+1}\left(2z_1\Phi'(z_1)-(g+1)\Phi(z_1)\right)\right].
\end{equation}
By \eqref{eq:secondary-residue-sum}, we obtain
\begin{equation}\label{eq:secondary-splitting}
\Sodd(g)=\mathcal S_+(g)+\mathcal S_{\mathrm{nr}}(g).
\end{equation}

The two summands $\mathcal S_+(g)$ and $\mathcal S_{\mathrm{nr}}(g)$ are distinguished by
their behaviour across residue classes.  Summing \eqref{eq:alpha-periodic} and
\eqref{eq:beta-periodic} over $m\in\{0,1,2\}$, and using that
$\sum_{m=0}^2\omega^{k(m+1)}$ equals $3$ if $k=0$ and $0$ if $k\in\{1,2\}$, gives
\begin{equation}\label{eq:mean-over-classes}
\frac13\sum_{m=0}^2\alpha_m=-\frac{q^{-1/3}}9\Phi(z_0),\qquad
\frac13\sum_{m=0}^2\beta_m=\frac{q^{-1/3}}9\left(2z_0\Phi'(z_0)-\Phi(z_0)\right).
\end{equation}
Comparing with the definition of $\mathcal S_+$, we see that $\mathcal S_+(g)$ is the
average over $m\in\{0,1,2\}$ of the three quantities $q^{2g/3}(\alpha_mg+\beta_m)$, of
which $\Sodd(g)$ is the one with $m\equiv g\bmod3$; and $\mathcal S_{\mathrm{nr}}(g)$ is the
deviation of that one from the average.  The secondary term of
\cite{Fl17}*{Theorem~1.1} is therefore the mean of the three, and not any one of them.
The three need not be close to that mean: for $q=3$ the coefficients
$\alpha_0,\alpha_1,\alpha_2$ computed in \S\ref{sec:numerical} are approximately $7.49$,
$-7.99$ and $3.50$ times their mean, and one of them has the opposite sign.

We first identify the positive-real-pole contribution with Florea's polynomial.
This is a matter of matching two displayed formulas rather than a new assertion,
so we record it as a remark.  Since \cite{Fl17}*{Theorem~1.1} is stated for $q$ prime
with $q\equiv1\bmod4$, the identification is made under that hypothesis; in that range
the quadratic Gauss phase is trivial and our square-dual normalization coincides with
that of \cite{Fl17}.

\begin{remark}\label{rem:Florea-identification}
Assume that $q$ is prime and $q\equiv1\bmod4$, as in \cite{Fl17}*{Theorem~1.1}.  The
polynomial $R$ of that theorem is obtained in \cite{Fl17}*{Section~6} from the residue of
\cite{Fl17}*{(6.7)} at the positive real double pole $z_0=q^{-4/3}$: that residue gives
the linear terms $P_1$ and $P_2$ of \cite{Fl17}*{(6.11)}, whose sum is written in
\cite{Fl17}*{(6.12)} as $q^{(2g+1)/3}R(2g+1)$.  Under the present normalization the
one-variable secondary integral \eqref{eq:B-secondary} is that same integral, since the
local factor $D_P(z,qz)$ is precisely the factor of \cite{Fl17}*{Lemma~6.3}.  Hence
$R=R_+$, and \eqref{eq:secondary-splitting} reads
\begin{equation}\label{eq:Florea-comparison}
\Sodd(g)=q^{(2g+1)/3}R_+(2g+1)+\mathcal S_{\mathrm{nr}}(g).
\end{equation}
We compute $R_+$ explicitly, in the shape of \cite{Fl17}*{(6.13)}, so that the two
displays may be compared term by term.

Put $\mathcal D(z)=\prod_PD_P(z,qz)$ and
\begin{equation*}
 C_0:=\frac{\zA(5/3)\zA(7/3)}{\zA(4/3)^2},\qquad z_0=q^{-4/3}.
\end{equation*}
Straight substitution in the rational factor in \eqref{eq:Phi-def} gives
\begin{align}
 \Phi(z_0)&=-C_0\mathcal D(z_0),\label{eq:Phi-z0-Florea}\\
 \frac12-2z_0\left.\frac{d}{dz}\log\!\left(
 \frac{(1-1/(qz))(1-q^3z^2)}{(1-z)(1-q^2z)}\right)\right|_{z=z_0}
 &=\frac{\zA(7/3)}{q^{4/3}}
 \left(-\frac52-2q^{1/3}-2q+\frac{q^{4/3}}2\right).
\label{eq:rational-log-Florea}
\end{align}
Inserting \eqref{eq:Phi-z0-Florea}--\eqref{eq:rational-log-Florea} into the definition of
$R_+$ yields
\begin{align}
R_+(X)={}&\frac{\zA(5/3)\zA(7/3)}
 {9q^{2/3}\zA(4/3)^2}\mathcal D(z_0)
 \biggl\{\frac X2+\frac{\zA(7/3)}{q^{4/3}}
 \left(-\frac52-2q^{1/3}-2q+\frac{q^{4/3}}2\right)
 -2z_0\frac{\mathcal D'(z_0)}{\mathcal D(z_0)}\biggr\},
\label{eq:RF-direct}
\end{align}
which is well defined because $\mathcal D$ is holomorphic and nonvanishing at $z_0$ by
Lemma~\ref{lem:secondary-nonvanishing}.

The factor preceding the brace in \eqref{eq:RF-direct} and the first two terms inside the
brace agree, term for term, with those of \cite{Fl17}*{(6.13)}.  The third term is printed there with
one factor $q^{-4/3}$ more than in \eqref{eq:RF-direct}; differentiating the double pole
of \cite{Fl17}*{(6.7)} produces the single factor $z_0=q^{-4/3}$ displayed above.  We
therefore compare with the residue itself, and fix \eqref{eq:RF-direct} once and for all
as its residue-normalized form; the explicit evaluation of
$z_0\mathcal D'(z_0)/\mathcal D(z_0)$ printed after \cite{Fl17}*{(6.13)} is not used
here.
\end{remark}

The substantive assertion of this subsection is the following lower bound, which
shows that the nonreal poles are genuinely present.  It carries no restriction on $q$.

\begin{proposition}\label{prop:nonreal-lower-bound}
Let $q$ be any fixed odd prime power.
There are at least two residue classes $m\bmod3$ for which there is a constant
$c_{q,m}>0$ with
$$
|\mathcal S_{\mathrm{nr}}(g)| \ge c_{q,m}\,g\,q^{2g/3}
$$
for all sufficiently large $g\equiv m\bmod 3$.  In particular $\mathcal S_{\mathrm{nr}}$
is not identically zero.
\end{proposition}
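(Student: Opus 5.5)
Write $c:=\Phi(z_1)$, which is nonzero by Lemma~\ref{lem:secondary-nonvanishing}, and $d:=2z_1\Phi'(z_1)$. By \eqref{eq:S-nonreal},
$$
\mathcal S_{\mathrm{nr}}(g)=\frac{2q^{2g/3-1/3}}9\Bigl(\operatorname{Re}\bigl[\omega^{g+1}d\bigr]-(g+1)\operatorname{Re}\bigl[\omega^{g+1}c\bigr]\Bigr).
$$
For $g\equiv m\bmod3$ the phase $\omega^{g+1}=\omega^{m+1}$ is fixed. The quantity $\operatorname{Re}[\omega^{m+1}d]$ is therefore a constant depending only on $q$ and $m$, and the coefficient of $g$ is the constant $-\frac{2q^{-1/3}}9\operatorname{Re}[\omega^{m+1}c]$.

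It follows that whenever $\operatorname{Re}[\omega^{m+1}c]\ne0$ we obtain
$$
|\mathcal S_{\mathrm{nr}}(g)|\ge q^{2g/3}\Bigl(\frac{2q^{-1/3}}9\,|\operatorname{Re}[\omega^{m+1}c]|\,g-O_q(1)\Bigr).
$$
For $g$ large in that class this is at least $c_{q,m}\,g\,q^{2g/3}$, with, say, $c_{q,m}=\frac{q^{-1/3}}9|\operatorname{Re}[\omega^{m+1}c]|$.

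It remains to show that $\operatorname{Re}[\omega^{j}c]$ vanishes for at most one $j\in\{0,1,2\}$.

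\emph{Key step.} Write $c=|c|e^{i\theta}$. Then $\operatorname{Re}[\omega^jc]=|c|\cos(\theta+2\pi j/3)$. This vanishes exactly when $\theta+2\pi j/3\equiv\pi/2\pmod\pi$.

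Suppose two distinct indices $j\ne j'$ both gave zero. Then $2\pi(j-j')/3\in\pi\mathbb Z$, so $2(j-j')/3\in\mathbb Z$. That forces $3\mid j-j'$, which is impossible for distinct $j,j'\in\{0,1,2\}$.

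Hence at most one residue class fails. Since $m\mapsto m+1$ permutes the classes, at least two classes $m$ have a nonzero coefficient, and these give the asserted bound. Nonvanishing of $\mathcal S_{\mathrm{nr}}$ follows immediately.

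The only substantive input is $\Phi(z_1)\ne0$, which Lemma~\ref{lem:secondary-nonvanishing} already provides. The rest is elementary. The step that needs care is checking that the bounded $g$-independent term cannot cancel the linear term. It cannot, because that term is $O_q(1)$ uniformly in $g$ and so is eventually dominated by the term linear in $g$.
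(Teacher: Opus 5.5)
Your proof is correct and follows the paper's argument essentially verbatim. Both use the nonvanishing $\Phi(z_1)\ne0$ from Lemma~\ref{lem:secondary-nonvanishing}, write $\operatorname{Re}\bigl(\omega^{m+1}\Phi(z_1)\bigr)=|\Phi(z_1)|\cos\bigl(\theta+2\pi(m+1)/3\bigr)$, show that at most one of the three phases gives a zero, and note that the $g$-independent part is $O_q(q^{2g/3})$. The only difference is cosmetic: you phrase the ``at most one zero'' step as the divisibility $3\mid 2(j-j')$, whereas the paper observes that the zeros of cosine are spaced by $\pi$ while the three angles are spaced by $2\pi/3$.
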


\begin{proof}
By Lemma~\ref{lem:secondary-nonvanishing}, $\Phi(z_1)\ne0$.  Write
$\Phi(z_1)=re^{i\theta}$ with $r>0$.  Then
$\operatorname{Re}\left(\omega^{m+1}\Phi(z_1)\right)=r\cos\!\left(\theta+\tfrac{2\pi(m+1)}3\right)$,
and the three angles $\theta+\tfrac{2\pi(m+1)}3$, $m\in\{0,1,2\}$, are pairwise
distinct modulo $2\pi$ and differ by $2\pi/3$.  Since the cosine vanishes only at
the two points $\pi/2$ and $3\pi/2$ modulo $2\pi$, which differ by $\pi$, at most one
of the three values can vanish.  Hence the set $M$ of those $m\in\{0,1,2\}$ with
$\operatorname{Re}\left(\omega^{m+1}\Phi(z_1)\right)\ne0$ has at least two elements.
For any $m\in\{0,1,2\}$ and any $g\equiv m\bmod 3$, \eqref{eq:S-nonreal} gives
\begin{align*}
\mathcal S_{\mathrm{nr}}(g)
=-\frac{2q^{-1/3}}9\operatorname{Re}\left(\omega^{m+1}\Phi(z_1)\right) gq^{2g/3} +O_q(q^{2g/3}).
\end{align*}
For $m\in M$ the coefficient of $gq^{2g/3}$ is nonzero, which proves the stated
lower bound.
\end{proof}

\begin{remark}
The three zeros of $1-q^4z^3$ have the same modulus.
Hence a circular contour deformation across the secondary circle crosses all three
double poles simultaneously, and this is so for either direction of the deformation:
the outward shift from $|z|=q^{-3/2}$ used in \cite{Fl17}*{Section~6} and the inward
shift to $|z|=q^{-3/2-\e}$ indicated in \cite{Fl17}*{Remark~3} both sweep the entire
circle $|z|=q^{-4/3}$.
Remark~\ref{rem:Florea-identification} shows, in \eqref{eq:Florea-comparison}, that
Florea's polynomial is precisely the contribution of the positive real pole, whereas by
Proposition~\ref{prop:nonreal-lower-bound} the two nonreal conjugate poles contribute
at the same order and produce the exact period-three dependence.
\end{remark}

\section{Numerical evidence for period three}\label{sec:numerical}

The purpose of this section is not to test the extension to arbitrary odd prime powers,
but to display the period-three dependence in actual finite-ensemble moments.
We therefore fix $q=3$ and compute enough genera to obtain four points in each
residue class modulo $3$.
No numerical input is used in the proof of Theorem~\ref{thm:main}.

Put
$$
Y_g=3^{-2g/3}\bigl(M_1(g)-\mathcal Q_g\bigr),
$$
where $\mathcal Q_g$ is the principal term \eqref{eq:principal-term}.
Theorem~\ref{thm:main} predicts that, for the unique $m\in\{0,1,2\}$ satisfying
$m\equiv g\bmod 3$,
\begin{equation}\label{eq:Y-prediction}
Y_g=\alpha_mg+\beta_m+O_\e\!\left(3^{-g(1/6-\e/2)}\right).
\end{equation}
Thus the numerical signature is not one linear trend with a small oscillatory error.
Rather, the sequence should split into three separate linear trends indexed by
$g\bmod3$.

The exact moments were obtained from the coefficient identity
\begin{equation*}
\sum_{D\in\Hh_{2g+1}}\chi_D(f)
=[u^{2g+1}]\left(\mathcal L(u,\psi_f)(1-3u^2)
\prod_{P\mid f}(1-u^{2d(P)})^{-1}\right),
\end{equation*}
which follows in one line from
$\prod_{P\nmid f}(1+\psi_f(P)u^{d(P)})
=\mathcal L(u,\psi_f)\mathcal Z_{\A}(u^2)^{-1}\prod_{P\mid f}(1-u^{2d(P)})^{-1}$.
The only nontrivial input is therefore the vector of coefficients
$\sigma_m(f)=\sum_{H\in\A_m^+}\psi_f(H)$ for $m<d(f)$.
Since $\psi_f$ is completely multiplicative in its argument, these are obtained by a
smallest-prime-factor sieve over $\A^+_{<d(f)}$ from the values $\psi_f(P)$,
and each $\psi_f(P)$ is evaluated by quadratic reciprocity as the quadratic
character of $\A/(P)$ at $f\bmod P$.  The accompanying code implements this scheme
together with the reductions described next, and reaches $g=15$.

A scaling symmetry reduces the computation further.  
Let $c\in\mathbb F_3^\times$ be the nonsquare element and put
$$
A^{(c)}(T):=c^{-d(A)}A(cT), \qquad (A\in\A).
$$
This transformation preserves monicity, degree, and square-freeness, while
$$
\chi_{D^{(c)}}(f^{(c)})
=\chi_2(c)^{d(D)d(f)}\chi_D(f).
$$
Since $d(D)=2g+1$ is odd, the aggregate contribution of every odd $d(f)$ vanishes.
For nonsquare $f$ of even degree, the character $\psi_f$ is nonprincipal and even.
Hence $\mathcal L(u,\psi_f)$ has the trivial factor $1-u$, so
$\mathcal L(1,\psi_f)=0$; the top coefficient of $\mathcal L(u,\psi_f)$ is therefore
recovered from the lower coefficients.
Together with a direct treatment of the perfect-square case, this gives exact values
of $M_1(g)$ for $1\le g\le15$.

All coefficient extractions were carried out in exact integer or rational arithmetic.
In the accompanying code, each vector dot product is preceded by the
rigorous bound
$r\max_i|a_i|\max_i|b_i|\le 2^{63}-1$ (where $r$ is the vector length);
when this fails, accumulation is automatically performed with Python
integers.  Thus the use of NumPy arrays cannot introduce silent signed
$64$-bit wraparound.

As direct consistency checks, the first four values are
$$
M_1(1)=36,\qquad M_1(2)=448,\qquad M_1(3)=5120,\qquad
M_1(4)=\frac{166960}{3}.
$$
At the upper end of the computation, one obtains
$$
M_1(14)=\frac{128791663628783456}{243},\qquad
M_1(15)=\frac{410863786895242016}{81}.
$$

For the three theoretical lines, the coefficients are
\begin{center}
\renewcommand{\arraystretch}{1.15}
\begin{tabular}{c|rr}
$m$ & $\alpha_m$ & $\beta_m$\\\hline
$0$ & $\phantom{-}0.0604838375$ & $\phantom{-}0.3617706494$\\
$1$ & $-0.0645165657$ & $-0.2128045840$\\
$2$ & $\phantom{-}0.0282448827$ & $-0.0467797514$
\end{tabular}
\end{center}
Their averages are $\tfrac13\sum_m\alpha_m=0.0080707181$ and
$\tfrac13\sum_m\beta_m=0.0340621047$, which by \eqref{eq:mean-over-classes} are the
coefficients of the single line predicted by \cite{Fl17}*{Theorem~1.1}.
The six coefficients were computed by grouping the Euler products by prime degree,
using
\begin{equation*}
 \pi_3(n)=\frac1n\sum_{d\mid n}\mu(d)3^{n/d},
\end{equation*}
and truncating after prime degree $200$, the arithmetic being carried out at
$100$-decimal working precision.  At the secondary points one has
$D_P(z_k,qz_k)-1\ll|P|^{-4/3}$, so that $\log D_P(z_k,qz_k)\ll|P|^{-4/3}$ and the tail
beyond prime degree $D$ contributes $O(3^{-D/3}/D)$, of size about $10^{-34}$ at
$D=200$.  Since $|P|^{-4/3}$ falls below the working precision before prime degree
$200$, the logarithms are accumulated from $D_P(z_k,qz_k)-1$ in closed form rather than
by forming $D_P(z_k,qz_k)$ and taking its logarithm; the accuracy is then limited by the
truncation.  Repeating the calculation at degree $160$ and $80$-decimal precision leaves
all displayed digits unchanged.  The accompanying files
\path{period_three_q3_reproduce.py} and \path{M1_exact_15.json} are available as
ancillary files with the arXiv version of this paper.  The former computes
$\mathscr P(1)$, $\mathscr P'(1)/((\log3)\mathscr P(1))$, the six coefficients, the
complete CSV table, and the PDF figure from the latter, which retains the exact
numerator and denominator of every moment used here.
We plot the balanced range $3\le g\le14$, in which each residue class occurs
exactly four times:
$$
\begin{aligned}
g\equiv0\bmod 3 &: 3,6,9,12,\\
g\equiv1\bmod 3 &: 4,7,10,13,\\
g\equiv2\bmod 3 &: 5,8,11,14.
\end{aligned}
$$
For reference, the residuals
$Y_g-(\alpha_mg+\beta_m)$ at $g=12,13,14$ are approximately
$-0.0307965$, $0.0208575$, and $0.0299341$, respectively.
The separately computed value at $g=15$, omitted from the figure only to keep the
three residue classes balanced, has residual approximately $0.0437558$.
These values are consistent with the error scale in \eqref{eq:Y-prediction}.

Figure~\ref{fig:period-three} displays the outcome.
The normalized exact moments do not lie near a single linear function of $g$.
After separation by residue class, they track the three different lines in
\eqref{eq:Y-prediction}.  The separation between the three progressions persists
through the extended range, supporting the conclusion that the secondary contribution
depends genuinely on $g\bmod3$ and is not exhausted by the positive real pole.

\begin{figure}[ht]
\centering
\includegraphics[width=0.72\textwidth]{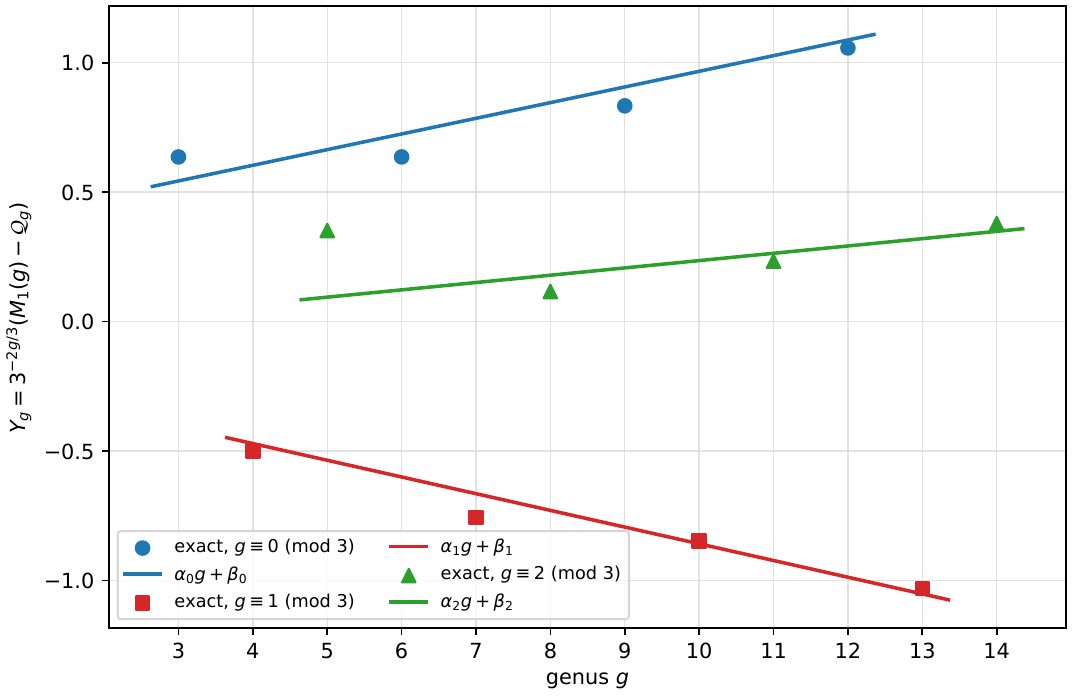}
\caption{For $q=3$, the markers are normalized exact-minus-principal moments and
the lines are $y=\alpha_mg+\beta_m$, $m=0,1,2$.
The balanced range $3\le g\le14$ contains four genera in each residue class modulo $3$,
and the exact data follow the three trends predicted by the secondary poles.}
\label{fig:period-three}
\end{figure}

\FloatBarrier

\end{document}